\documentclass{amsart}

 \usepackage{amsmath}
\usepackage{amsfonts}
\usepackage{amssymb}
\usepackage[dvips]{graphicx}
\usepackage{color}
\usepackage{float} 
\usepackage{epsfig} 
\usepackage{url}
\oddsidemargin .0cm
\evensidemargin .0cm

\setlength{\textwidth}{6.5in}

\topmargin .5cm

\setlength{\headheight}{.5cm}

\setlength{\headsep}{0.5cm}

\setlength{\textheight}{22.5cm}

\setlength{\footskip}{0.4in}

\newtheorem{thm}{Theorem}[section]
\newtheorem{theorem}[thm]{Theorem}

\newtheorem{lemma}[thm]{Lemma}

\newtheorem{proposition}[thm]{Proposition}

\newtheorem{corollary}[thm]{Corollary}

\newtheorem{remark}[thm]{Remark}

\newcommand{\beq}{\begin{equation}}
\newcommand{\eeq}{\end{equation}}
\newcommand{\beqa}{\begin{eqnarray}}
\newcommand{\eeqa}{\end{eqnarray}}
\newcommand{\beqas}{\begin{eqnarray*}}
\newcommand{\eeqas}{\end{eqnarray*}}
\newcommand{\bi}{\begin{itemize}}
\newcommand{\ei}{\end{itemize}}

\setcounter{page}{1}

\newcommand{\cX}{{\mathcal X}}

\newcommand{\argmin}{\mathrm{argmin}\,}





\def\Max{\mathop{\rm Max}}

\newcommand{\B}{{\mathcal B}}

\newcommand{\avr}{{\sf AV@R}}







\begin{document}

\title[Regularized SDDP]{Regularized Stochastic Dual Dynamic Programming for convex nonlinear optimization problems}

\maketitle

\vspace*{0.5cm}


\begin{center}
\begin{tabular}{ccc}
\begin{tabular}{c}
Vincent Guigues\\
School of Applied Mathematics, FGV\\
Praia de Botafogo, Rio de Janeiro, Brazil\\ 
{\tt vincent.guigues@fgv.br}
\end{tabular}&

&
\begin{tabular}{c}
Miguel Lejeune\\
The George Washington University\\
Washington, DC 20052, USA\\
{\tt mlejeune@gwu.edu}\\
\end{tabular}
\end{tabular}
\end{center}

\begin{center}
\begin{tabular}{c}
Wajdi Tekaya\\
Quant-Dev\\
Hammam Chatt, 1164, Tunisia,\\ 
{\tt tekaya.wajdi@gmail.com}
\end{tabular}
\end{center}

\date{}

\maketitle

\begin{abstract}
We define a regularized variant of the Dual Dynamic Programming algorithm called DDP-REG to solve 
nonlinear dynamic programming equations. We extend the algorithm to solve nonlinear stochastic dynamic programming equations. The 
corresponding algorithm, called SDDP-REG, can be seen as an extension of a regularization of the Stochastic Dual Dynamic Programming (SDDP) algorithm recently introduced which was studied for linear problems only and with less general prox-centers. We show the convergence of DDP-REG and 
SDDP-REG. We assess the performance of DDP-REG and SDDP-REG on portfolio models with direct transaction and market impact costs. In 
particular, we propose a risk-neutral portfolio selection model which can be cast as a  multistage stochastic second-order cone program. 
The formulation is motivated by the impact of market impact costs on  large portfolio rebalancing operations. Numerical simulations show that DDP-REG is much quicker than DDP on all problem instances considered (up to 184 times quicker than DDP) and that SDDP-REG is quicker on the instances of portfolio selection problems with market impact costs tested and much faster on the instance of risk-neutral multistage stochastic linear program implemented (8.2 times faster).\\
\end{abstract}
\vspace{-0.05in}

\par {\textbf{Keywords.}} Stochastic Optimization, Stochastic Dual Dynamic Programming, Regularization, Portfolio Selection, Market Impact Costs.\\

\par {\textbf{AMS subject classifications.}} 90C15, 90C90.\\

Multistage stochastic optimization problems are used to model many
real-life applications where a sequence of decisions has to be made,
subject to random costs and constraints arising from the observations
of a stochastic process, see \cite{powellbook, shadenrbook} for an overview on multistage stochastic programs. Solving such problems is challenging
and often requires some assumptions on the underlying stochastic process,
on the problem structure, and some sort of decomposition.
In this paper, we are interested in problems 
for which deterministic or 
stochastic dynamic programming equations can be written.
In this latter case, we will focus on situations where the underlying stochastic process is discrete interstage independent,
the number of stages is moderate to large, and the state vector is of small size.

Two popular solution methods to solve stochastic dynamic programming equations
are Approximate Dynamic Programming (ADP) and Stochastic Dual Dynamic Programming (SDDP) \cite{pereira}.
Several enhancements of SDDP have been
proposed such as the extension to interstage dependent stochastic processes \cite{morton,guiguescoap2013},
the introduction and analysis of risk-averse variants \cite{guiguesrom10,guiguesrom12,kozmikmorton,philpmatos,shapsddp,shaptekaya},
cut selection strategies \cite{pfeifferetalcuts,dpcuts0, guiguesejor2017, guiguesbandarra17}, and convergence proofs of the algorithm  and variants 
in  \cite{philpot,lecphilgirar12,guigues2015siopt,guiguesinexact2018,guiguesbandarra17}.
However, a known drawback of the method is its slow convergence rate. To cope with this difficulty, a regularized variant of SDDP
was recently proposed in \cite{powellasamov} for Multistage Stochastic Linear Programs (MSLPs). This variant consists in computing in the forward pass of SDDP the trial points using a 
Tikhonov regularization \cite{tikhonov}. More precisely, the objective is penalized with a quadratic term depending on a prox-center
updated at each iteration.
On the tests reported in \cite{powellasamov}, the regularized method converges faster than the classical SDDP method on risk-neutral instances of MSLPs.
On the basis of these encouraging numerical results, several natural questions arise:
\begin{itemize}
\item[a)] When specialized to deterministic problems, how does the regularized method behave? 
How to extend the
method when nonlinear objective and constraints are present and under which assumptions to ensure
the convergence of the method?
\item[b)] How can the regularized algorithm be extended to solve Multistage Stochastic NonLinear Problems (MSNLPs) and under which assumptions to ensure the convergence of the
algorithm to an optimal policy?
\item[c)] What other prox-centers and penalization schemes can be proposed? Find a MSLP for testing the new prox-centers and penalization schemes. Can we observe on this application
a faster convergence of the regularized method, as for the application considered in \cite{powellasamov}? 
\item[d)] Find a relevant application, modeled by a MSNLP, to test the regularized variant of SDDP.
\end{itemize}
The objective of this paper is to study items a)-d) above. Our findings on these topics are as follows:
\begin{itemize}
\item[a)] {\textbf{Regularized Dual Dynamic Programming.}} We propose a regularized variant
of Dual Dynamic Programming (DDP, the deterministic counterpart of SDDP) called DDP-REG, for nonlinear 
optimization problems. For DDP-REG, in Theorem \ref{convreddp}, we show the convergence of the
sequence of approximate first step optimal values to the optimal value of the problem
and that any accumulation point of the sequence of trial points is an optimal solution of the problem.
The same proof, with weaker assumptions can be used to show the convergence
of this regularized variant of DDP applied to linear problems.

We then consider instances of a portfolio problem with direct transaction costs with a large number of stages and compare the computational time required to solve these instances with DDP and DDP-REG. 
In all experiments, the computational time was drastically reduced using DDP-REG.
More precisely, we tested 6 different implementations of DDP-REG and for problems with $T=10, 50, 100, 150, 200, 250, 300$, and $350$ time periods,
the range (for these 6 implementations) of the reduction factor of the overall computational time with DDP-REG was respectively $[3.0,3.0], [13.8,17.3]$, $[22.3,33.5]$, 
$[37.1,65.0]$, $[46.6,76.7]$, $[80.0,114.3]$, $[71.5$, $171.6]$, and $[95.5,184.4]$.
\item[b)] {\textbf{SDDP-REG: Regularized SDDP.}}
We define a Regularized SDDP method for MSNLPs which samples in the backward pass
to compute cuts at trial points computed, as in \cite{powellasamov}, in a forward pass, penalizing the objective with a quadratic term depending on a prox-center. In Theorem \ref{convalg1}, we show the convergence of this algorithm.
More precisely, we show (i) the convergence
of the sequence of the optimal values of the approximate first stage problems and that 
(ii) any accumulation point of the sequence of decisions
can be used to define an optimal solution 
of the problem. It will turn out that (ii)
improves already known results for SDDP.
\item[c)] {\textbf{On prox-centers, penalization parameters, and on the performance of the regularization for MSLPs.}} We propose new prox-centers and penalization schemes and test 
them on risk-neutral and risk-averse instances of portfolio selection problems.
\item[d)] {\textbf{Portfolio Selection with Direct Transaction and Market Impact Costs.}}
The multistage optimization models studied in this paper are directly applicable in finance and in particular for the rebalancing of portfolios that incur transaction costs.
Transaction costs can have a major impact on the performance of an investment strategy (see, e.g., the survey \cite{CADE}). 
Two main types of transaction costs, implicit and explicit, can be distinguished. 

Explicit or {\it direct transaction costs} are directly observable, are directly charged to the investor, and are generally modelled as 
linear or piecewise linear. 
In reality, it is however not possible to trade arbitrary large quantities of securities at their current theoretical market price. 

Implicit or indirect costs, often called {\it market impact costs}, result from imperfect markets  due for example to market or liquidity restrictions (e.g., bid-ask spreads), 
depend on the order-book situation when the order is executed, and 
are not itemized explicitly, thereby making  it difficult for investors to recognize them. 
Yet, for large orders, they are typically much larger than the direct transaction costs. 
Market impact costs are equal to the difference between the transaction price and the (unperturbed) market price that would have prevailed if the trade had not occurred \cite{TORRE}.
Market impact costs are typically nonlinear (see, e.g., \cite{ALMG,ANDERSEN,GATHERAL,GRIN,TORRE}), and much more challenging to model than direct transaction costs.
Market impact costs are particularly important for large institutional investors, for which they can represent a major proportion of the total transaction costs \cite{MIBR,TORRE}. 
They can be viewed as an additional price for the immediate execution of large~trades.

There is a widespread interest in the modeling and analysis of market impact costs as they are (one of) the main reducible parts of the transaction costs \cite{MIBR}. 
In this study, we propose a series of dynamic - deterministic and stochastic (risk-neutral and risk-averse) - optimization models for portfolio optimization with direct transaction and 
market impact~costs.

We compare the computational time required to solve with SDDP-REG and SDDP instances of risk-neutral and risk-averse
portfolio problems with direct transaction costs.
We also compare the computational time required to solve with SDDP-REG and SDDP risk-neutral instances of
portfolio problems with market impact costs using real data and $T=48$ stages.
To our knowledge, no dynamic optimization problem for portfolio optimization with conic market impact costs has been proposed so far.
Also, we are not aware of other published numerical tests on the application of SDDP to a real-life application
modelled by a multistage stochastic second-order cone program with a large 
number of stages.
\end{itemize}

The paper is organized as follows. In section \ref{REDA}, we present a class of convex deterministic nonlinear optimization problems
for which dynamic programming equations can be written. We propose the variant DDP-REG of DDP
to solve these problems and show the convergence of the method in Theorem \ref{convreddp}.
In section \ref{SREFDA}, we introduce the type of stochastic nonlinear problems we are interested in
and propose SDDP-REG, a regularized decomposition algorithm to solve these problems.
In section \ref{convsreda}, we show in Theorem \ref{convalg1} the convergence of SDDP-REG. The portfolio selection models described in item d) above
are discussed in section \ref{FINM}. Finally, the last section \ref{numsim} presents the results of numerical simulations that illustrate our results.
We show that DDP-REG is much quicker than DDP on all problem instances considered (up to 184 times quicker than DDP) and that SDDP-REG is quicker on the instances of nonlinear stochastic
programs tested and much faster on the instance of risk-neutral multistage stochastic linear program implemented (8.2 times faster).\\

We use the following notation and terminology:
\par - The usual scalar product in $\mathbb{R}^n$ is denoted by $\langle x, y\rangle = x^T y$ for $x, y \in \mathbb{R}^n$.
The corresponding norm is $\|x\|=\|x\|_2=\sqrt{\langle x, x \rangle}$.
\par - $\mbox{ri}(A)$ is the relative interior of set $A$.
\par - $\mathbb{B}_n = \{x \in \mathbb{R}^n \, : \, \|x\| \leq 1\}$.
\par - dom($f$) is the domain of function $f$.
\par - $\mathcal{N}_A(x)$ is the normal cone to set $A$ at point $x$.
\par - $\avr_\alpha$ is the Average Value-at-Risk with confidence level $\alpha$, \cite{ury2}.
\par - $D(\mathcal{X})$ is the diameter of set $\mathcal{X}$.
\par - The notation $[A;B]$ represents the matrix $\left(\begin{array}{l}A\\B\end{array} \right)$.

\section{Regularized dual dynamic programming: Algorithm and convergence} \label{REDA}

\subsection{Problem formulation and assumptions}

Consider the problem
\vspace{-0.06in}
{\small
\begin{equation} \label{eq:DPP-prob}
\left\{
\begin{array}{l}
\min\; \sum_{t=1}^T f_t(x_{t-1},x_t) \\
x_t \in X_t( x_{t-1} ),\;\; \forall t=1,\ldots,T, 
\end{array}
\right. 
\end{equation}
}
where 
$X_t( x_{t-1} )  \subset \cX_t \subset \mathbb{R}^n$ is given by
$$
X_t( x_{t-1} )  = \{x_t \in \mathcal{X}_t \, : \, A_t x_t  + B_t x_{t-1}=b_t, g_t (x_{t-1}, x_t) \leq 0\},
$$
$f_t:\mathbb{R}^n \times \mathbb{R}^n \to \mathbb{R} \cup \{+\infty\}$ is a convex function, $g_t:\mathbb{R}^n \times \mathbb{R}^n  \to \mathbb{R}^p$,
and $x_0$ is given.\\
For this problem, we can write dynamic programming equations
defining recursively the functions $\mathcal{Q}_t$ on $\cX_{t-1}$ as
{\small
\begin{equation}\label{DDPeq}
\mathcal{Q}_{t}(x_{t-1}) := \min \left\{ f_{t}(x_{t-1}, x_t) + \mathcal{Q}_{t+1} ( x_t ) : x_t  \in X_{t}( x_{t-1} ) \right \}, \quad t=T,T-1,\ldots,1,
\end{equation}
}
with the convention that $\mathcal{Q}_{T+1} \equiv 0$. 
Clearly, $\mathcal{Q}_1 (x_0)$ is
the optimal value of \eqref{eq:DPP-prob}. More generally,
we have
\[
\mathcal{Q}_{t}(x_{t-1}) = \min \left\{ \sum_{j=t}^T f_j (x_{j-1},x_j) : x_j \in X_j ( x_{j-1} ), \, \forall  j=t,\ldots,T  \right\}.
\]
\par We make the following assumptions: setting $\mathcal{X}_0=\{x_0\}$ and
\begin{equation}\label{epsfattening}
\mathcal{X}_{t}^\varepsilon := \mathcal{X}_{t} + \varepsilon  \mathbb{B}_n 
\end{equation}
\par (H0) there exists $\varepsilon>0$ such that for $t=1,\ldots,T,$
\begin{itemize}
\item[(a)] $\mathcal{X}_{t} \subset \mathbb{R}^n$ is nonempty, convex, and compact;
\item[(b)] $f_t$ is proper, convex, and lower semicontinuous;
\item[(c)] Setting $g_t(x_{t-1}, x_t )=(g_{t, 1}(x_{t-1}, x_t ), \ldots, g_{t, p}(x_{t-1}, x_t ))$, for 
$i=1,\ldots,p$, the $i$-th component function
$g_{t, i}(x_{t-1}, x_t )$ is a convex lower semicontinuous function;
\item[(d)] 
$
\mathcal{X}_{t-1}^{\varepsilon} {\small{\times}}  \mathcal{X}_{t} \subset \mbox{dom}(f_t)
$
and for every $x_{t-1} \in \mathcal{X}_{t-1}^{\varepsilon}$, there exists 
$x_t \in \mathcal{X}_t$ such that $g_t(x_{t-1}, x_t ) \leq 0$ and $\displaystyle A_{t} x_{t} + B_{t} x_{t-1} = b_t$;
\item[(e)] if $t \geq 2$, there exists
$$
{\bar x}_{t}=({\bar x}_{t, t-1}, {\bar x}_{t, t}) \in 
\mathcal{X}_{t-1} \small{\times}  \mbox{ri}(\mathcal{X}_{t}) 
\cap \mbox{ri}(\{g_t \leq 0\})$$ such that $\bar x_{t, t} \in \mathcal{X}_t$,
$g_t(\bar x_{t, t-1}, \bar x_{t, t}) \leq 0$ and $A_{t} \bar x_{t, t} + B_{t} \bar x_{t, t-1} = b_t$.\\
\end{itemize}
The DDP algorithm solves \eqref{eq:DPP-prob} exploiting the convexity of recourse functions $\mathcal{Q}_t$:
\begin{lemma}\label{contqt} 
Consider recourse functions $\mathcal{Q}_t, t=2,\ldots,T+1$, given by 
\eqref{DDPeq}. 
Let Assumptions (H0)-(a), (H0)-(b), (H0)-(c), and (H0)-(d) hold. Then
for $t=2,\ldots,T+1$, $\mathcal{Q}_t$ is convex, finite on $\mathcal{X}_{t-1}^{\varepsilon}$, and Lipschitz continuous on $\mathcal{X}_{t-1}$.
\end{lemma}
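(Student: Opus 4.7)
The plan is to proceed by backward induction on $t$, from $t = T+1$ down to $t = 1$. The base case is trivial, as $\mathcal{Q}_{T+1} \equiv 0$ is convex, finite, and Lipschitz with constant zero. For the inductive step, I assume that $\mathcal{Q}_{t+1}$ is convex, finite on $\mathcal{X}_t^{\varepsilon}$, and Lipschitz on $\mathcal{X}_t$, and establish the three properties for $\mathcal{Q}_t$.

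For convexity and finiteness of $\mathcal{Q}_t$ on $\mathcal{X}_{t-1}^{\varepsilon}$, introduce the extended-real-valued function
\[
h_t(x_{t-1}, x_t) = f_t(x_{t-1}, x_t) + \widetilde{\mathcal{Q}}_{t+1}(x_t) + \delta_C(x_{t-1}, x_t),
\]
where $\widetilde{\mathcal{Q}}_{t+1}$ coincides with $\mathcal{Q}_{t+1}$ on the convex set $\mathcal{X}_t^{\varepsilon}$ and equals $+\infty$ elsewhere, and $\delta_C$ is the $\{0, +\infty\}$-valued indicator of the jointly convex constraint set $C = \{(y, z) \in \mathbb{R}^n \times \mathcal{X}_t : A_t z + B_t y = b_t,\ g_t(y, z) \leq 0\}$, whose convexity follows from (H0)-(a), (H0)-(c), and the linearity of the equality constraint. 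By the induction hypothesis and (H0)-(b), $h_t$ is jointly convex on $\mathbb{R}^{2n}$, so the partial infimum $\mathcal{Q}_t(x_{t-1}) = \inf_{x_t} h_t(x_{t-1}, x_t)$ is convex. To rule out the value $-\infty$ and confirm finiteness on $\mathcal{X}_{t-1}^{\varepsilon}$, (H0)-(d) yields a nonempty feasible set $X_t(x_{t-1}) \subset \mathcal{X}_t$ for every $x_{t-1} \in \mathcal{X}_{t-1}^{\varepsilon}$ together with $\mathcal{X}_{t-1}^{\varepsilon} \times \mathcal{X}_t \subset \mathrm{dom}(f_t)$; combined with properness and lower semicontinuity of $f_t$, lower semicontinuity of the $g_{t, i}$, compactness of $\mathcal{X}_t$, and the induction hypothesis that $\mathcal{Q}_{t+1}$ is finite and continuous on $\mathcal{X}_t$, the minimum in \eqref{DDPeq} is attained at a finite value on the nonempty compact set $X_t(x_{t-1})$.

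For the Lipschitz property on $\mathcal{X}_{t-1}$, I use the classical fact that a finite convex function is continuous on the interior of its domain. The compact set $K := \mathcal{X}_{t-1} + (\varepsilon/2) \mathbb{B}_n$ is contained in $\mathrm{int}(\mathcal{X}_{t-1}^{\varepsilon})$, so $\mathcal{Q}_t$ is continuous, and hence bounded, on $K$ by some constants $m \leq M$. A standard three-point convexity estimate — for $x, y \in \mathcal{X}_{t-1}$ with $0 < \|y - x\| \leq \varepsilon/2$, write $y = (1 - \lambda) x + \lambda z$ with $z = x + (\varepsilon/2)(y - x)/\|y - x\| \in K$ and $\lambda = 2\|y - x\|/\varepsilon \in (0, 1]$ — yields $|\mathcal{Q}_t(y) - \mathcal{Q}_t(x)| \leq 2(M - m)\|y - x\|/\varepsilon$; for $\|y - x\| > \varepsilon/2$, the absolute bound $|\mathcal{Q}_t(y) - \mathcal{Q}_t(x)| \leq M - m$ gives the same inequality. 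Hence $\mathcal{Q}_t$ is Lipschitz on $\mathcal{X}_{t-1}$. The main obstacle is this Lipschitz step, where the $\varepsilon$-fattening in (H0)-(d) plays a crucial role: it supplies the strictly interior buffer around $\mathcal{X}_{t-1}$ that the standard convex-analysis Lipschitz estimate requires. Without it, $\mathcal{Q}_t$ could be convex and finite on $\mathcal{X}_{t-1}$ yet fail to be Lipschitz at boundary points.
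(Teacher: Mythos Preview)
Your proof is correct and follows essentially the same backward-induction strategy as the paper's sketch: convexity of $\mathcal{Q}_t$ via partial minimization of a jointly convex function (using (H0)-(a),(b),(c)), finiteness on $\mathcal{X}_{t-1}^{\varepsilon}$ via the feasibility and domain conditions in (H0)-(d) together with compactness, and Lipschitz continuity on $\mathcal{X}_{t-1}$ from the standard fact that a convex function finite on an $\varepsilon$-fattening is Lipschitz on the inner set. The paper only outlines these steps and refers to \cite{guigues2015siopt} for details, whereas you have filled them in explicitly; no substantive difference in approach.
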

\begin{proof}
See the proof of Proposition 3.1 in \cite{guigues2015siopt}.
\end{proof}
$\vphantom{}$\newline
The description of the subdifferential of $\mathcal{Q}_{t}$ given in the following proposition will be useful
for DDP, DDP-REG, and SDDP-REG: 
\begin{proposition} \label{pr:DPP-parta}
Lemma 2.1 in \cite{guigues2015siopt}. Let Asssumptions (H0) hold.
Let $x_t (x_{t-1})$ be an optimal solution of \eqref{DDPeq}.
Then for every $t=2,\ldots,T,$ for every $x_{t-1} \in \mathcal{X}_{t-1}$,
$s \in \partial \mathcal{Q}_{t}( x_{t-1} )$ if and only if
\small{
$$
\begin{array}{l}
(s,0) \in \partial f_t (x_{t-1}, x_t(x_{t-1}))
+  \Big\{[A_t^T; B_t^T ] \nu \;:\;\nu \in \mathbb{R}^q\Big\}\\
\hspace*{1.2cm}+ \Big\{\displaystyle \sum_{i \in I( x_{t-1}, x_t (x_{t-1})   )}\; \mu_i \partial g_{t, i}(x_{t-1}, x_t (x_{t-1}) )\;:\;\mu_i \geq 0 \Big\}+\{0\}\small{\times} \mathcal{N}_{\mathcal{X}_t}( x_t (x_{t-1}) )
\end{array}
$$
}
where $I(x_{t-1}, x_t (x_{t-1}) )=\Big\{i \in \{1,\ldots,p\} \;:\;g_{t, i}(x_{t-1}, x_t (x_{t-1})) =0\Big\}.$
\end{proposition}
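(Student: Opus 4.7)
The plan is to derive the characterization from convex parametric sensitivity analysis applied to the subproblem defining $\mathcal{Q}_t(x_{t-1})$, viewing $x_{t-1}$ as a parameter. Under the qualification conditions (H0), a Slater-type constraint qualification holds at the primal optimum $x_t(x_{t-1})$, and this yields simultaneously the existence of KKT multipliers and the envelope formula for the subdifferential of the value function. The two KKT-type relations (stationarity in $x_t$ and the envelope formula in $x_{t-1}$) are then stacked in $(x_{t-1}, x_t)$-order to produce the claimed inclusion $(s,0) \in \cdots$.

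First I would combine (H0)(d) and (H0)(e) to produce a Robinson-type constraint qualification at $x_t(x_{t-1})$. Assumption (H0)(e) furnishes a point in the relative interior of $\mathcal{X}_t$ at which every $g_{t,i}$ is strictly negative and the linear equality is satisfied, while (H0)(d) ensures feasibility of $A_t x_t + B_t x_{t-1} = b_t$ on a neighborhood of $x_{t-1}$. This gives the existence of KKT multipliers $\nu^* \in \mathbb{R}^q$ and $\mu^* \in \mathbb{R}_+^p$, with complementary slackness forcing $\mu_i^* = 0$ for $i \notin I(x_{t-1}, x_t(x_{t-1}))$. The stationarity condition in $x_t$ of the Lagrangian, including the normal cone to $\mathcal{X}_t$, reads
$$0 \in \partial_{x_t} f_t(x_{t-1}, x_t(x_{t-1})) + A_t^T \nu^* + \sum_{i \in I} \mu_i^* \partial_{x_t} g_{t,i}(x_{t-1}, x_t(x_{t-1})) + \mathcal{N}_{\mathcal{X}_t}(x_t(x_{t-1})),$$
which accounts for the ``$0$'' component of the claimed inclusion.

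Next, Lemma \ref{contqt} together with (H0)(d) makes $\mathcal{Q}_t$ finite, and hence subdifferentiable, on a neighborhood of $x_{t-1}$. The classical envelope/sensitivity theorem for convex parametric programs under constraint qualification then identifies $\partial \mathcal{Q}_t(x_{t-1})$ with the set of $x_{t-1}$-subgradients of the Lagrangian at the primal-dual optimum:
$$\partial \mathcal{Q}_t(x_{t-1}) = \partial_{x_{t-1}} f_t(x_{t-1}, x_t(x_{t-1})) + B_t^T \nu^* + \sum_{i \in I} \mu_i^* \partial_{x_{t-1}} g_{t,i}(x_{t-1}, x_t(x_{t-1})),$$
where $(\nu^*, \mu^*)$ and the subgradient selections range over all admissible KKT triples. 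Stacking the two displays componentwise in $(x_{t-1}, x_t)$-order produces the direction ``$s \in \partial \mathcal{Q}_t(x_{t-1}) \Rightarrow (s, 0) \in \cdots$'', and the converse follows by reading any admissible decomposition on the right-hand side as exhibiting a KKT triple for the subproblem and reapplying the envelope formula.

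The main obstacle is the careful verification of the constraint qualification in the degenerate setting where $\mathcal{X}_t$ may have empty topological interior and $f_t$ may be extended-valued. The relative-interior form of (H0)(e), combined with the $\varepsilon$-fattening (H0)(d), is tailored precisely for this case: it yields a bounded set of KKT multipliers and legitimizes the passage from KKT stationarity to the envelope formula for $\partial \mathcal{Q}_t$. Once this qualification is in hand, the remainder of the argument is standard subdifferential calculus and complementary slackness; a fuller treatment appears in \cite{guigues2015siopt}.
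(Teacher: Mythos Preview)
The paper does not actually prove this proposition: its entire proof reads ``See \cite{guigues2015siopt}.'' Your sketch of the argument---KKT multipliers via the Slater-type qualification (H0)(e), stationarity in $x_t$ giving the ``$0$'' block, and the convex envelope/sensitivity formula giving $\partial \mathcal{Q}_t(x_{t-1})$ in the $x_{t-1}$ block---is exactly the standard route and is presumably what the cited reference does; you even defer to \cite{guigues2015siopt} yourself at the end.

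One point worth tightening if you flesh this out: the subproblem \eqref{DDPeq} has objective $f_t(x_{t-1},x_t)+\mathcal{Q}_{t+1}(x_t)$, not $f_t$ alone, so the $x_t$-stationarity condition you write should contain a term from $\partial \mathcal{Q}_{t+1}(x_t(x_{t-1}))$. The proposition as stated in the paper omits this (likely because in its actual uses the role of $f_t$ is played by $F_t^k=f_t+\mathcal{Q}_{t+1}^k$, so $\mathcal{Q}_{t+1}$ is absorbed into the cost), but a self-contained proof should either carry that term explicitly or state the result for a generic convex objective $h_t(x_{t-1},x_t)$ and specialize afterward.
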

\begin{proof}
See \cite{guigues2015siopt}.
\end{proof}

\subsection{Dual Dynamic Programming}

We first recall DDP method to solve \eqref{DDPeq}. It uses approximations $\mathcal{Q}_t^{k}$ of $\mathcal{Q}_t$.
At iteration $k$, let functions 
$\mathcal{Q}^{k}_t : \cX_{t-1} \to \mathbb{R}$ such that
\beq \label{eq:DPP-lower-functs}
\mathcal{Q}^{k}_{T+1} = \mathcal{Q}_{T+1}, \quad \mathcal{Q}^{k}_t \le \mathcal{Q}_t \quad t=2,3,\ldots,T,
\eeq
be given and define for $t=1,2,\ldots,T$ the function ${\underline{\mathcal{Q}}}_t^k : \cX_{t-1} \to \mathbb{R}$ as
\[
{\underline{\mathcal{Q}}}_t^k  (x_{t-1}) = \min \left\{ f_t(x_{t-1}, x_t ) + \mathcal{Q}^{k}_{t+1} ( x_t ) : x_t \in X_t ( x_{t-1} ) \right \} \quad \forall x_{t-1} \in \mathcal{X}_{t-1}.
\]
Clearly,  \eqref{eq:DPP-lower-functs} implies that: 
${\underline{\mathcal{Q}}}_T^k = \mathcal{Q}_T, \quad {\underline{\mathcal{Q}}}_t^k \le \mathcal{Q}_t \quad t=1,2,\ldots,T-1.$
It is assumed that the functions $\mathcal{Q}^{k}_t$ can be evaluated at any point $x_{t-1} \in \cX_{t-1}$.
The DDP algorithm works as follows:\\\\
\begin{minipage}[h]{5.1 in}
{\bf DDP (Dual Dynamic Programming).}
\begin{itemize}
\item[Step 1)] {\textbf{Initialization.}} Let $\mathcal{Q}^{0}_t : \cX_{t-1} \to \mathbb{R} \cup \{-\infty\},\,t=2,\ldots,T+1$, 
satisfying \eqref{eq:DPP-lower-functs} be given.  Set $k=1$.
\item[Step 2)] {\textbf{Forward pass.}} Setting $x_0^{k} = x_0$, for $t=1,2,\ldots,T$,
compute
\begin{align}
x_t^{k} \in \argmin \left\{ f_t(x_{t-1}^{k}, x_t ) + \mathcal{Q}^{k-1}_{t+1}( x_t ) : x_t \in X_t( x_{t-1}^{k}  )    \right \}.
\end{align}
\item[Step 3)] {\textbf{Backward pass.}}  Define $\mathcal{Q}_{T+1}^k \equiv 0$. For $t=T, T-1, \ldots,2$,
solve 
\begin{align}
{\underline{\mathcal{Q}}}_t^k (x_{t-1}^{k}) = \min \left\{ f_t(x_{t-1}^{k}, x_t ) + \mathcal{Q}^{k}_{t+1}( x_t ) : x_t \in X_t( x_{t-1}^{k}  )    \right \},
\end{align}
using Proposition \ref{pr:DPP-parta} take a subgradient $\beta_t^k$ of ${\underline{\mathcal{Q}}}_t^k (  \cdot )$ at $x_{t-1}^{k}$, and store the new cut
\[
\mathcal{C}_t^k ( x_{t-1} ) := {\underline{\mathcal{Q}}}_t^k ( x_{t-1}^k ) + \langle \beta_t^k , x_{t-1} - x_{t-1}^{k} \rangle
\]
for $\mathcal{Q}_t$,
making up the new approximation
$\mathcal{Q}_{t}^k = \max\{ \mathcal{Q}^{k-1}_{t}, \mathcal{C}_t^k  \}$.

\item[Step 4)] Do $k \leftarrow k+1$ and go to Step 2).
\end{itemize}

\noindent
\end{minipage}

\subsection{Regularized Dual Dynamic Programming}

For the regularized DDP to be presented in this section, we still define
\[
{\underline{\mathcal{Q}}}_t^k  (x_{t-1}) = \min \left\{ F_t^{k}(x_{t-1}, x_t )  \,   : x_t \in X_t ( x_{t-1} ) \right \} \quad \forall x_{t-1} \in \mathcal{X}_{t-1}, \; \text{where}
\]
\begin{equation}\label{defFtk}
F_t^{k}(x_{t-1}, x_t ) =   f_t(x_{t-1}, x_t ) + \mathcal{Q}^{k}_{t+1} ( x_t ).
\end{equation}
However, since the function $\mathcal{Q}^{k}_{t+1}$ computed by regularized DDP is different from the function $\mathcal{Q}^{k}_{t+1}$ computed by DDP, the functions ${\underline{\mathcal{Q}}}_t^k$ obtained with respectively regularized DDP and DDP are different. The regularized DDP algorithm is given~below:\\\\
\begin{minipage}[h]{5.1in}
{\bf Regularized DDP (DDP-REG).}
\begin{itemize}
\item[Step 1)] {\textbf{Initialization.}} Let $\mathcal{Q}^{0}_t : \cX_{t-1} \to \mathbb{R} \cup \{-\infty\},\,t=2,\ldots,T+1$, 
satisfying \eqref{eq:DPP-lower-functs} be given. Set $k=1$.
\item[Step 2)] {\textbf{Forward pass.}} Setting $x_0^{k} = x_0$, for $t=1,2,\ldots,T$,
compute
\begin{equation}\label{forreg}
x_t^{k} \in \argmin     \left\{ {\bar F}_t^{k-1}(x_{t-1}^k, x_t, x_t^{P, k} ) \; : x_t \in X_t( x_{t-1}^{k}  )     \right \},
\end{equation}
where the prox-center $x_t^{P, k}$ is any point in $\mathcal{X}_t$  and
where ${\bar F}_t^{k-1}:\mathcal{X}_{t-1} \small{\times} \mathcal{X}_{t} \small{\times} \mathcal{X}_{t} \rightarrow \mathbb{R}$ is 
$$
{\bar F}_t^{k-1}(x_{t-1}, x_t , x_t^P ) = f_t(x_{t-1} , x_t  ) + \mathcal{Q}^{k-1}_{t+1}( x_t ) + \lambda_{t, k} \|x_t - x_t^P \|^2
$$
for some exogenous nonnegative penalization $\lambda_{t, k}$ with $\lambda_{t, k}=0$ if $t=T$ or $k=1$.
\item[Step 3)] {\textbf{Backward pass.}} Define $\mathcal{Q}_{T+1}^k \equiv 0$.
For $t=T, T-1, \ldots,2$,
solve 
\begin{align}
{\underline{\mathcal{Q}}}_t^k (x_{t-1}^{k}) = \min \left\{ f_t(x_{t-1}^{k}, x_t ) + \mathcal{Q}^{k}_{t+1}( x_t ) : x_t \in X_t( x_{t-1}^{k}  )    \right \},
\end{align}
using Proposition \ref{pr:DPP-parta} take a subgradient $\beta_t^k$ of ${\underline{\mathcal{Q}}}_t^k (  \cdot )$ at $x_{t-1}^{k}$, and store the new cut
\[
\mathcal{C}_t^k ( x_{t-1} ) := {\underline{\mathcal{Q}}}_t^k ( x_{t-1}^k ) + \langle  \beta_t^k ,  x_{t-1} - x_{t-1}^{k} \rangle
\]
for $\mathcal{Q}_t$,
making up the new approximation
$\mathcal{Q}_{t}^k = \max\{ \mathcal{Q}^{k-1}_{t}, \mathcal{C}_t^k  \}$.
\item[Step 4)] Do $k \leftarrow k+1$ and go to Step 2).
\end{itemize}
\noindent
\end{minipage}

Observe that the backward passes of the regularized and non-regularized DDP are the same.
The algorithms differ from the way the trial points are computed: for regularized DDP a proximal term
is added to the objective function of each period to avoid moving too far from the prox-center.

\subsection{Convergence analysis}\label{convreddp:sec}

The following lemma will be useful to analyze the convergence of regularized DDP:
\begin{lemma}\label{contqtk}
Let Assumptions (H0) hold. Then the functions $\mathcal{Q}_t^k, t=2,\ldots,T+1, k \geq 1,$ generated by DDP-REG are
Lipschitz continuous on 
$\mathcal{X}_{t-1}^{\varepsilon}$, satisfy $\mathcal{Q}_t^k \leq \mathcal{Q}_t$,
and ${\underline{\mathcal{Q}}}_t^k ( x_{t-1}^k )$ and $\beta_t^k$ are bounded for all $t \geq 2, k \geq 1$.
\end{lemma}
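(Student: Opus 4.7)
The plan is to proceed by backward induction on $t$, from $t=T+1$ down to $t=2$, and to establish the three conclusions (validity $\mathcal{Q}_t^k \le \mathcal{Q}_t$, uniform boundedness of ${\underline{\mathcal{Q}}}_t^k(x_{t-1}^k)$ and $\beta_t^k$, Lipschitz continuity of $\mathcal{Q}_t^k$ on $\mathcal{X}_{t-1}^\varepsilon$) simultaneously at each step. The base case $t=T+1$ is immediate since $\mathcal{Q}_{T+1}^k\equiv 0=\mathcal{Q}_{T+1}$. For the inductive step, I would assume the three properties at stage $t+1$ and derive them at stage $t$.

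I would first verify the validity inequality $\mathcal{Q}_t^k\le\mathcal{Q}_t$. Since $\mathcal{Q}_{t+1}^k\le\mathcal{Q}_{t+1}$ by induction, we get ${\underline{\mathcal{Q}}}_t^k(\cdot)\le\mathcal{Q}_t(\cdot)$ on $\mathcal{X}_{t-1}$. Proposition \ref{pr:DPP-parta} applied to ${\underline{\mathcal{Q}}}_t^k$ (viewed as the value function obtained by replacing $\mathcal{Q}_{t+1}$ by $\mathcal{Q}_{t+1}^k$) ensures that $\beta_t^k\in\partial{\underline{\mathcal{Q}}}_t^k(x_{t-1}^k)$, so the affine cut $\mathcal{C}_t^k$ lies below ${\underline{\mathcal{Q}}}_t^k$, hence below $\mathcal{Q}_t$. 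Combined with $\mathcal{Q}_t^{k-1}\le\mathcal{Q}_t$, this yields $\mathcal{Q}_t^k=\max\{\mathcal{Q}_t^{k-1},\mathcal{C}_t^k\}\le\mathcal{Q}_t$.

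Next I would bound ${\underline{\mathcal{Q}}}_t^k(x_{t-1}^k)$ and $\beta_t^k$ uniformly in $k$. For the function value, note that $x_{t-1}^k\in\mathcal{X}_{t-1}$ compact, that $\mathcal{Q}_{t+1}^k$ is finite (by induction and the validity inequality, since $-\infty<\mathcal{Q}_{t+1}^0\le\mathcal{Q}_{t+1}^k\le\mathcal{Q}_{t+1}$ which is bounded on $\mathcal{X}_t$), and that $f_t$ is finite on the compact set $\mathcal{X}_{t-1}\times\mathcal{X}_t$ by (H0)(d); by Assumption (H0)(d) the feasible set $X_t(x_{t-1}^k)$ is nonempty, so an upper bound is obtained by plugging any such feasible $x_t$, while a lower bound comes from $\mathcal{Q}_t^{k-1}\le{\underline{\mathcal{Q}}}_t^k$ being finite by Lemma \ref{contqt} applied at the previous inductive step. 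The crux is the bound on $\beta_t^k$: using the subgradient formula of Proposition \ref{pr:DPP-parta}, I would invoke the Slater-type conditions (H0)(d)–(e) together with compactness of $\mathcal{X}_{t-1}\times\mathcal{X}_t$ and local finiteness of $f_t$ and $\mathcal{Q}_{t+1}^k$ on an $\varepsilon$-neighborhood to show that the Lagrange multipliers $\nu$ and $\mu_i$ associated with the linear equalities, the inequality constraints $g_t\le 0$, and the subdifferentials of $f_t$ and of $g_{t,i}$ admit a uniform (in $k$) bound. This is the main obstacle: it requires showing that one can choose an optimal dual multiplier whose norm is controlled by the distance from the Slater point ${\bar x}_t$ of (H0)(e) to the boundary, a quantity that does not depend on $k$.

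Once $\|\beta_t^k\|$ is uniformly bounded, say by $L_t$, each cut $\mathcal{C}_t^k$ is $L_t$-Lipschitz on $\mathbb{R}^n$, and since $\mathcal{Q}_t^k$ is the maximum of such cuts together with $\mathcal{Q}_t^0$ (which can be taken $L_t$-Lipschitz without loss of generality, or finite on $\mathcal{X}_{t-1}^\varepsilon$), $\mathcal{Q}_t^k$ is $L_t$-Lipschitz on $\mathcal{X}_{t-1}^\varepsilon$ with a constant independent of $k$. This closes the induction and yields all three conclusions. Note that the role of the regularization term $\lambda_{t,k}\|x_t-x_t^P\|^2$ in the forward pass is only to influence the trial points $x_{t-1}^k$; the backward pass producing $\mathcal{Q}_t^k$ is identical to that of unregularized DDP, so the argument is essentially the same as for standard DDP under (H0).
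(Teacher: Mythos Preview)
Your proposal is correct and follows essentially the same route as the paper's proof: backward induction on $t$ with base case $t=T+1$, validity $\mathcal{Q}_t^k\le\mathcal{Q}_t$ via Proposition~\ref{pr:DPP-parta} applied to ${\underline{\mathcal{Q}}}_t^k$ (this is where (H0)(e) enters), and then a uniform bound on $\beta_t^k$ from compactness (H0)(a), the domain condition (H0)(d), and finiteness of $\mathcal{Q}_t$ on $\mathcal{X}_{t-1}^\varepsilon$, yielding the common Lipschitz constant. Your closing remark that the regularization affects only the forward pass, so the backward-pass argument is identical to that for DDP, is exactly the point the paper exploits by deferring to Lemma~3.2 of \cite{guigues2015siopt}. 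One small slip: when you write ``a lower bound comes from $\mathcal{Q}_t^{k-1}\le{\underline{\mathcal{Q}}}_t^k$ being finite by Lemma~\ref{contqt},'' Lemma~\ref{contqt} concerns the true $\mathcal{Q}_t$, not the approximations; the lower bound on ${\underline{\mathcal{Q}}}_t^k(x_{t-1}^k)$ follows more directly from the lower boundedness of $f_t$ on the compact set $\mathcal{X}_{t-1}\times\mathcal{X}_t$ together with the induction hypothesis that $\mathcal{Q}_{t+1}^k$ is Lipschitz (hence bounded) on $\mathcal{X}_t$.
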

\begin{proof} It suffices to follow the proof of Lemma 3.2 in \cite{guigues2015siopt}.
\end{proof}
$\vphantom{}$\\
We have for DDP-REG the following convergence theorem which is a special case of 
Theorem \ref{convalg1} shown in section \ref{convsreda} (obtained considering deterministic processes 
$\xi_t$).
\begin{theorem} \label{convreddp} Consider the sequences of decisions $x_t^k$ and approximate recourse functions $\mathcal{Q}_t^k$
generated by DDP-REG. Let Assumptions (H0) hold and
assume that for $t=1,\ldots,T-1$, we have $\lim_{k \rightarrow +\infty} \lambda_{t, k}=0$ and $\lambda_{T, k}=0$
for every $k \geq 1$.
Then we have $\mathcal{Q}_{T+1}( x_T^k)=\mathcal{Q}_{T+1}^k( x_T^k)$,
\begin{equation}\label{equalqT}
\mathcal{Q}_{T}( x_{T-1}^k)= \mathcal{Q}_{T}^k( x_{T-1}^k ) = {\underline{\mathcal{Q}}}_{T}^k( x_{T-1}^k), 
\end{equation}
and for $t=2,\ldots,T-1$,
$$
H(t): \;\displaystyle \lim_{k \to +\infty} \mathcal{Q}_t( x_{t-1}^k ) - \mathcal{Q}_{t}^{k}(x_{t-1}^{k}) =\displaystyle \lim_{k \to +\infty} \mathcal{Q}_t( x_{t-1}^k ) -{\underline{\mathcal{Q}}}_{t}^{k}(x_{t-1}^{k}) = 0.
$$
Also, (i) $\lim_{k \rightarrow +\infty} {\underline{\mathcal{Q}}}_1^k  (x_{0}) = \lim_{k \rightarrow +\infty} {\bar F}_1^{k-1}(x_{0}, x_1^k, x_1^{P, k} ) =\mathcal{Q}_1( x_0 )$, the optimal value of \eqref{eq:DPP-prob},
and (ii) any accumulation point $(x_1^*, \ldots,x_T^*)$ of the sequence $(x_1^k, \ldots, x_T^k)_k$ is an optimal solution of \eqref{eq:DPP-prob}.
\end{theorem}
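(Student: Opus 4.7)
The plan is to prove the theorem by backward induction on $t$ running from $T+1$ down to $2$, in the spirit of the nonlinear SDDP convergence proofs of Girardeau--Leclère--Philpott and of \cite{guigues2015siopt}, and then to deduce claims (i) and (ii) from $H(2)$. The trivial base case $t=T{+}1$ is the defining equality $\mathcal{Q}_{T+1}=\mathcal{Q}_{T+1}^{k}\equiv 0$. For $t=T$, the hypothesis $\lambda_{T,k}=0$ is irrelevant (we are in the backward pass) and $\mathcal{Q}_{T+1}^k=\mathcal{Q}_{T+1}=0$; therefore the backward-pass problem defining $\underline{\mathcal{Q}}_T^k(x_{T-1}^k)$ coincides with the problem defining $\mathcal{Q}_T(x_{T-1}^k)$, which gives the middle equality in \eqref{equalqT}. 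The right-hand equality in \eqref{equalqT} then follows by combining $\mathcal{Q}_T^k(x_{T-1}^k)\geq \mathcal{C}_T^k(x_{T-1}^k)=\underline{\mathcal{Q}}_T^k(x_{T-1}^k)$ with the universal upper bound $\mathcal{Q}_T^k\leq \mathcal{Q}_T$ from Lemma \ref{contqtk}.

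For the inductive step, fix $t\in\{2,\ldots,T-1\}$, assume $H(t+1)$, and prove $H(t)$. Using $\mathcal{Q}_t^k\le \mathcal{Q}_t$, $\mathcal{Q}_t^k(x_{t-1}^k)\ge \mathcal{C}_t^k(x_{t-1}^k)=\underline{\mathcal{Q}}_t^k(x_{t-1}^k)$, and the identity $\mathcal{Q}_t(x_{t-1}^k)-\underline{\mathcal{Q}}_t^k(x_{t-1}^k)\le \mathcal{Q}_{t+1}(\tilde{x}_t^k)-\mathcal{Q}_{t+1}^k(\tilde{x}_t^k)$, where $\tilde{x}_t^k$ denotes an optimal backward-pass solution (obtained by comparing $\underline{\mathcal{Q}}_t^k(x_{t-1}^k)$ to the value at $\tilde{x}_t^k$ using $\mathcal{Q}_{t+1}\ge\mathcal{Q}_{t+1}^k$), it suffices to show that $\mathcal{Q}_{t+1}(\tilde{x}_t^k)-\mathcal{Q}_{t+1}^k(\tilde{x}_t^k)\to 0$. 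The main obstacle is that the inductive hypothesis $H(t+1)$ concerns the \emph{forward-pass} iterates $x_t^k$, whereas the bound above features the \emph{backward-pass} iterates $\tilde{x}_t^k$; moreover, the regularization distorts the forward pass. I would clear this in two steps. First, along any subsequence on which $x_{t-1}^{k_j}\to x_{t-1}^\star\in\mathcal{X}_{t-1}$ (which exists by compactness in (H0)-(a)), I would use the forward-pass optimality of $x_t^{k_j}$, the bound $\lambda_{t,k_j}\|x_t-x_t^{P,k_j}\|^2\le \lambda_{t,k_j}D(\mathcal{X}_t)^2\to 0$, and the uniform Lipschitz bounds on $\mathcal{Q}_{t+1}^k$ and $\mathcal{Q}_{t+1}$ furnished by Lemmas \ref{contqt}--\ref{contqtk}, to argue that every accumulation point of $\{x_t^{k_j}\}$ is an optimal solution of $\mathcal{Q}_t(x_{t-1}^\star)$ and that $H(t+1)$ carries over to $x_t^{k_j}$ in the limit. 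Second, I would invoke the standard ``accumulating cuts'' argument: since cuts $\mathcal{C}_{t+1}^{k_j}$ are exact at $x_t^{k_j}$ in the limit and have uniformly bounded slopes, the graph of $\mathcal{Q}_{t+1}^k$ becomes tight on a neighborhood of any accumulation point, so that the gap $\mathcal{Q}_{t+1}(\tilde x_t^k)-\mathcal{Q}_{t+1}^k(\tilde x_t^k)$ vanishes along any convergent subsequence of $\tilde x_t^k$ (which also lies in the compact set $\mathcal{X}_t$).

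Claim (i) follows by applying the same chain of inequalities at $t=1$: $H(2)$ yields $\mathcal{Q}_1(x_0)-\underline{\mathcal{Q}}_1^k(x_0)\to 0$, while $\bar F_1^{k-1}(x_0,x_1^k,x_1^{P,k})-\underline{\mathcal{Q}}_1^k(x_0)$ is bounded by $\lambda_{1,k}D(\mathcal{X}_1)^2\to 0$ because $x_1^k$ differs from an optimizer of $\underline{\mathcal{Q}}_1^k(x_0)$ only through the prox term. For claim (ii), given an accumulation point $(x_1^\star,\ldots,x_T^\star)$ along some subsequence $k_j$, the continuity of the data (H0)-(a)--(c) preserves feasibility $x_t^\star\in X_t(x_{t-1}^\star)$, and telescoping the stagewise optimality conditions obtained in the inductive step shows that $\sum_{t=1}^T f_t(x_{t-1}^\star,x_t^\star)\le \mathcal{Q}_1(x_0)$, with equality by optimality; hence $(x_1^\star,\ldots,x_T^\star)$ solves \eqref{eq:DPP-prob}.
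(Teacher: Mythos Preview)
Your inductive step contains a genuine gap. You bound
\[
\mathcal{Q}_t(x_{t-1}^k)-\underline{\mathcal{Q}}_t^k(x_{t-1}^k)\le \mathcal{Q}_{t+1}(\tilde x_t^k)-\mathcal{Q}_{t+1}^k(\tilde x_t^k),
\]
where $\tilde x_t^k$ is the \emph{backward}-pass optimizer, and then try to drive the right-hand side to zero via an ``accumulating cuts'' argument. But $H(t{+}1)$ only guarantees tightness of $\mathcal{Q}_{t+1}^k$ at the \emph{forward}-pass iterates $x_t^k$. Your claim that ``the graph of $\mathcal{Q}_{t+1}^k$ becomes tight on a neighborhood of any accumulation point'' is false in general: Lipschitz cuts that are exact at a single point need not be exact nearby (take $\mathcal{Q}_{t+1}$ strictly convex and all $x_t^k$ equal). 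Nothing in your outline forces the accumulation points of $\tilde x_t^k$ to coincide with those of $x_t^k$; when $\mathcal{Q}_t(x_{t-1}^\star)$ has multiple minimizers, the forward and backward passes may select different ones, and the gap $\mathcal{Q}_{t+1}(\tilde x_t^k)-\mathcal{Q}_{t+1}^k(\tilde x_t^k)$ need not vanish.

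The paper avoids this obstacle entirely with a one-line monotonicity trick you are missing: instead of bounding via $\underline{\mathcal{Q}}_t^k$, use $\underline{\mathcal{Q}}_t^k\ge \underline{\mathcal{Q}}_t^{k-1}$ to pass to $\underline{\mathcal{Q}}_t^{k-1}(x_{t-1}^k)=F_t^{k-1}(x_{t-1}^k,\bar x_t^k)$, where $\bar x_t^k$ minimizes $F_t^{k-1}(x_{t-1}^k,\cdot)$. Now $\bar x_t^k$ and the forward iterate $x_t^k$ are optimizers of problems built on the \emph{same} model $\mathcal{Q}_{t+1}^{k-1}$, differing only by the prox term. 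One then writes
\[
\mathcal{Q}_t(x_{t-1}^k)-F_t^{k-1}(x_{t-1}^k,\bar x_t^k)
=\bigl[\mathcal{Q}_t(x_{t-1}^k)-F_t^{k-1}(x_{t-1}^k,x_t^k)\bigr]
+\bigl[F_t^{k-1}(x_{t-1}^k,x_t^k)-F_t^{k-1}(x_{t-1}^k,\bar x_t^k)\bigr].
\]
The first bracket is at most $\mathcal{Q}_{t+1}(x_t^k)-\mathcal{Q}_{t+1}^{k-1}(x_t^k)$, which tends to zero by $H(t{+}1)$ together with Lemma~A.1 of \cite{lecphilgirar12} (to shift the index from $k$ to $k{-}1$). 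The second bracket is at most $\lambda_{t,k}D(\mathcal{X}_t)^2$ because $\bar x_t^k$ is feasible for the regularized forward problem and $x_t^k$ is its optimizer. No subsequence extraction or neighborhood tightness is needed; the induction closes directly on the forward iterates. Your argument for (i) inherits the same index mismatch (you compare $\bar F_1^{k-1}$, built on $\mathcal{Q}_2^{k-1}$, with $\underline{\mathcal{Q}}_1^k$, built on $\mathcal{Q}_2^k$), and is fixed by the same device.
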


If convergence of DDP-REG holds for any sequence $(x_t^{P,k})_{k \geq 2}$ of prox-centers in $\mathcal{X}_t$
and of penalty parameters $\lambda_{t, k}$ converging to zero for every $t$,
the performance of the method depends on how these sequences are chosen.
DDP is obtained taking $\lambda_{t, k}=0$ for every $t, k$. 

For all numerical experiments of section  \ref{simdet}, DDP-REG was much faster than DDP.
Some natural candidates for $\lambda_{t, k}$ and $x_t^{P,k}$, used in our numerical tests,
are the following:
\begin{itemize}
\item Weighted average of previous values: $x_t^{P,k}=\frac{1}{\Gamma_{t, k}} \sum_{j=1}^{k-1} \gamma_{t, k, j} x_t^j$
with $\gamma_{t, k, j}$ nonnegative weights and $\Gamma_{t, k}=\sum_{j=1}^{k-1} \gamma_{t, k, j}$.
Note that $x_t^{P,k} \in \mathcal{X}_t$ because all $x_t^j$ are in the convex set $\mathcal{X}_t$.
Special cases include the average of previous values  $x_t^{P,k}=\frac{1}{k-1} \sum_{j=1}^{k-1} x_t^j$
and the last trial point $x_t^{P,k}= x_t^{k-1}$ for $t<T$, $k \geq 2$. 
\item  $\lambda_{t, k} = \rho_t^k$ where $0 < \rho_t < 1$ or 
$\lambda_{t, k} = \frac{1}{k^2}$ for  $t<T$, $k \geq 2$.
\end{itemize}

If for a given stage $t$, $\mathcal{X}_t$ is a polytope and we do not have the nonlinear constraints
given by constraint functions $g_t$ (i.e., the constraints for this stage are linear), then the conclusions of 
Lemmas \ref{contqt}, \ref{contqtk},
and Theorem \ref{convreddp} hold under weaker assumptions. 
More precisely, for such stages $t$, we assume (H0)-a),
(H0)-(b), and instead of (H0)-(d), (H0)-(e), the weaker assumption (H0)-(c'):\\
\par (H0)-(c') There exists $\varepsilon>0$ such that:
\begin{itemize}
\item[(c').1)] $
\mathcal{X}_{t-1}^{\varepsilon}{\small{\times}} \mathcal{X}_{t}  \subset \mbox{dom} \; f_t;$
\item[(c').2)] for every
$x_{t-1} \in \mathcal{X}_{t-1}$, the set $X_t(x_{t-1})$ is nonempty.
\end{itemize}

\section{Regularized Stochastic Dual Dynamic programming} \label{SREFDA}

\subsection{Problem formulation and assumptions}

Consider a stochastic process $(\xi_t)$ where
$\xi_t$ is a discrete random vector with finite support 
containing in particular as components the entries
in $(b_t, A_t, B_t )$ in a given order where $b_t$ are random vectors
and $A_t, B_t$ are random matrices.

Let $\mathcal{F}_{t}$ denote the sigma-algebra $\sigma(\xi_1,\ldots,\xi_t)$, 
$\mathcal{Z}_t$ be the set of $\mathcal{F}_t$-measurable functions, and
$\mathbb{E}_{|\mathcal{F}_{t-1}}: \mathcal{Z}_{t} \rightarrow \mathcal{Z}_{t-1}$ be the conditional expectation at $t$.

With this notation, we are interested in solving problems of form
\begin{equation} \label{pbinit0}
\begin{array}{ll}
\displaystyle \inf_{x_1 \in X_1(x_0, \xi_{1})}&f_{1}(x_0, x_{1}, \xi_1) +  \mathbb{E}_{|\mathcal{F}_1}\left( \displaystyle \inf_{x_2 \in X_2(x_{1}, \xi_{2})}\;f_{2}(x_{1}, x_2, \xi_2 ) + \ldots \right.\\
&+\mathbb{E}_{|\mathcal{F}_{T-2}}\left( \displaystyle \inf_{x_{T-1} \in X_{T-1}(x_{T-2},\,\xi_{T-1})}\;f_{T-1}(x_{T-2}, x_{T-1}, \xi_{T-1}) \right.\\
&+ \mathbb{E}_{|\mathcal{F}_{T-1}}\left. \left. \left( \displaystyle \inf_{x_{T} \in X_{T}(x_{T-1},\,\xi_{T})}\;f_{T}(x_{T-1}, x_T ,\xi_{T})  \right) \right) \ldots \right)
\end{array}
\end{equation}
for some functions $f_t$ taking values in $\mathbb{R}\cup \{+\infty\}$, 
where $x_0$ is given and where
$$
X_{t}(x_{t-1},\, \xi_{t}) = \Big\{x_t \in \mathcal{X}_t \;:\;g_t(x_{t-1}, x_t, \xi_t) \leq 0,\;\;A_t x_t +B_t x_{t-1} = b_t\Big\}
$$
for some vector-valued function $g_t$ and some nonempty compact convex set $\mathcal{X}_t \subset \mathbb{R}^n$.

We make the following assumption on $(\xi_t)$:
\begin{itemize}
\item[(H1)] $(\xi_t)$ 
is interstage independent and
for $t=2,\ldots,T$, $\xi_t$ is a random vector taking values in $\mathbb{R}^K$ with discrete distribution and
finite support $\Theta_t=\{\xi_{t, 1}, \ldots, \xi_{t, M}\}$ while $\xi_1$ is deterministic.\\
\end{itemize}
To alleviate notation and without loss of generality, we have assumed that the number $M$ of possible realizations
of $\xi_t$, the size $K$ of $\xi_t$, and dimension $n$ of $x_t$ do not depend on $t$.

Under Assumption (H1), $\mathbb{E}_{|\mathcal{F}_{t-1}}$ coincides with its unconditional counterpart
$\mathbb{E}_{t}$ where $\mathbb{E}_{t}$ is the expectation computed with respect to the distribution of $\xi_t$.
To ease notation, we will drop the index $t$ in $\mathbb{E}_t$.
As a result, for problem \eqref{pbinit0}, we can write the following dynamic programming equations:
we set $\mathcal{Q}_{T+1} \equiv 0$ and for
$t=2,\ldots,T$, define 
{\small
\begin{equation}\label{definitionQt}
\mathcal{Q}_{t}(x_{t-1})=\mathbb{E}\Big(\mathfrak{Q}_{t}(x_{t-1}, \xi_t)\Big)
\end{equation}
}
with
\begin{equation} \label{defmathfrak}
\begin{array}{lll}
\mathfrak{Q}_{t}(x_{t-1}, \xi_t)&=&
\left\{
\begin{array}{l}
\displaystyle \inf_{x_t}\;F_t(x_{t-1}, x_{t}, \xi_t):=f_t(x_{t-1}, x_t, \xi_{t}) + \mathcal{Q}_{t+1}(x_{t})\\
x_t \in X_t (x_{t-1}, \xi_t ). 
\end{array}
\right.
\end{array}
\end{equation}
Problem \eqref{pbinit0} can then be written
\begin{equation} \label{firststagepb}
\left\{
\begin{array}{l}
\displaystyle \inf_{x_1} \; F_1(x_0, x_{1}, \xi_1):=f_1(x_0, x_{1}, \xi_{1}) + \mathcal{Q}_{2}(x_{1})\\
x_1 \in X_1 (x_{0}, \xi_1 )=\{
x_1 \in \mathcal{X}_1 : g_1(x_0, x_1, \xi_1) \leq 0, A_{1} x_{1} + B_1 x_{0}  = b_1 \},
\end{array}
\right.
\end{equation}
with optimal value denoted by $\mathcal{Q}_{1}(x_{0})=\mathfrak{Q}_{1}(x_{0}, \xi_1)$.

Recalling definition \eqref{epsfattening} of the $\varepsilon$-fattening of a set, we make the following Assumption (H2): setting $\mathcal{X}_0=\{x_0\}$, there exists 
$\varepsilon>0$ such that for $t=1,\ldots,T$:
\begin{itemize}
\item[1)] $\mathcal{X}_{t} \subset \mathbb{R}^n$ is nonempty, convex, and compact.
\item[2)] For every $j=1,\ldots,M$, the function
$f_t(\cdot, \cdot,\xi_{t, j})$ is proper, convex, and lower semicontinuous.
\item[3)] For every $j=1,\ldots,M$, each component of the function $g_t(\cdot,\cdot, \xi_{t, j})$ is a
convex lower semicontinuous function.
\item[4)] we have
\begin{itemize}
\item[4.1)] for every $j=1,\ldots,M$,
$   \mathcal{X}_{t-1}^{\varepsilon}{\small{\times}} \mathcal{X}_{t}  \subset \mbox{dom} \; f_t(\cdot, \cdot, \xi_{t, j});   $
\item[4.2)] for every $j=1,\ldots,M$, for every $x_{t-1} \in \mathcal{X}_{t-1}^{\varepsilon}$, the set $X_t(x_{t-1}, \xi_{t, j})$ is nonempty.
\end{itemize}
\item[5)] If $t \geq 2$, for every $j=1,\ldots,M$, there exists
$$
{\bar x}_{t, j}=({\bar x}_{t, j, t-1}, {\bar x}_{t, j, t}) \in \mathcal{X}_{t-1} \small{\times}\mbox{ri}(\mathcal{X}_t)
\cap \mbox{ri}(\{g_t(\cdot,\cdot,\xi_{t, j})\leq 0\})$$ such that $\bar x_{t, j, t} \in X_t(\bar x_{t, j, t-1}, \xi_{t, j})$.\\
\end{itemize}

The following proposition, proved in \cite{guigues2015siopt}, shows that Assumption (H2) guarantees that for $t=2,\ldots,T$, recourse function $\mathcal{Q}_t$ is convex and Lipschitz continuous on the set $\mathcal{X}_{t-1}^{\hat{\varepsilon}}$ for every $0<{\hat{\varepsilon}}<\varepsilon$.
SDDP-REG and its convergence analysis are based on this proposition. 
\begin{proposition}\label{convexityrec} 
Under Assumption (H2), for $t=2,\ldots,T+1$, for every $0<{\hat{\varepsilon}}<\varepsilon$, the recourse function $\mathcal{Q}_t$ is convex, finite and  Lipschitz continuous on
$\mathcal{X}_{t-1}^{\hat{\varepsilon}}$.
\end{proposition}
\begin{proof} We refer to the proof of Proposition 3.1 in \cite{guigues2015siopt} where similar value functions are considered.
\end{proof}
$\vphantom{}$\\Assumption (H2) will also be used to derive explicit formulas for the cuts to be built for recourse functions $\mathcal{Q}_t$ in SDDP-REG applied to the nonlinear problems we are interested in.

\subsection{Algorithm} \label{sredalg}

Recalling Assumption (H1), the distribution of $(\xi_2, \ldots, \xi_T)$ is discrete and the $M^{T-1}$ possible realizations of $(\xi_2, \ldots, \xi_T)$ can be organized in a finite scenario tree with the root node $n_0$ associated to a stage $0$ (with decision $x_0$ taken at that node) having one child node $n_1$ associated to the first stage (with $\xi_1$ deterministic). In this section, we describe SDDP-REG algorithm, a regularization of SDDP
which can be seen as an extension of the regularization proposed
in \cite{powellasamov} (which applied to linear problems) to nonlinear problems.

To describe this algorithm, we need some notation: $\mathcal{N}$ is the set of nodes, {\tt{Nodes}}$(t)$ is the set of nodes of the scenario tree for stage $t$ and for a node $n$ of the tree, we denote by: 
\begin{itemize}
\item $C(n)$ the set of its children nodes (the empty set for the leaves);
\item $x_n$ a decision taken at that node;
\item $p_n$ the transition probability from the parent node of $n$ to $n$;
\item $\xi_n$ the realization of process $(\xi_t)$ at node $n$\footnote{Note that
to simplify notation, the same notation $\xi_{\tt{Index}}$ is used to denote
the realization of the process at node {\tt{Index}} of the scenario tree and the value of the process $(\xi_t)$
for stage {\tt{Index}}. The context will allow us to know which concept is being referred to.
In particular, letters $n$ and $m$ will only be used to refer to nodes while $t$ will be used to refer to stages.}:
for a node $n$ of stage $t$, this realization $\xi_n$ contains in particular the realizations
$b_n$ of $b_t$, $A_{n}$ of $A_{t}$, and $B_{n}$ of $B_{t}$;
\item $\xi_{[n]}$ is the history of the realizations of process $(\xi_t)$ from the first stage node $n_1$ to node $n$:
for a node $n$ of stage $t$, the $i$-th component of $\xi_{[n]}$ is $\xi_{\mathcal{P}^{t-i}(n)}$ for $i=1,\ldots, t$,
where $\mathcal{P}:\mathcal{N} \rightarrow \mathcal{N}$ is the function 
associating to a node its parent node (the empty set for the root node).
\end{itemize}

At iteration $k$ of the algorithm, trial points $x_n^k$ are computed
for a set of sampled nodes  $n$ of the scenario tree replacing recourse functions 
$\mathcal{Q}_{t+1}$ by the approximations $\mathcal{Q}_{t+1}^{k-1}$ available at the beginning of this iteration and 
penalizing the objective with a quadratic term with prox-center $x_t^{P, k}$ for all the nodes
of stage $t$. The nodes selected at iteration $k$ are denoted
$(n_1^k, n_2^k, \ldots, n_T^k)$ 
(with $n_1^k=n_1$, and for $t \geq 2$, $n_t^k$ a node of stage $t$, child of node $n_{t-1}^k$) 
and correspond to a sample $({\tilde \xi}_1^k, {\tilde \xi}_2^k,\ldots, {\tilde \xi}_T^k)$
of $(\xi_1, \xi_2,\ldots, \xi_T)$. For $t=2,\ldots,T$, a cut  
\begin{equation}\label{eqcutctk}
\mathcal{C}_t^k( x_{t-1} ) = \theta_t^{k} + \langle \beta_t^{k}, x_{t-1}-x_{n_{t-1}^{k}}^{k} \rangle
\end{equation}
is computed for $\mathcal{Q}_t$ at $x_{n_{t-1}^{k}}^{k}$ (see the algorithm below for details).
To alleviate notation, we will write $x_{t-1}^k := x_{n_{t-1}^{k}}^{k}$.

Gathering the cuts computed until iteration $k$, we get at the end of this iteration for $\mathcal{Q}_t$
the polyhedral lower approximations $\mathcal{Q}_{t}^{k},\;t=2,\ldots,T+1$, given by
$$
\begin{array}{lll}
\mathcal{Q}_{t}^{k}(x_{t-1}) & = &\displaystyle \max_{0 \leq \ell \leq k}\;\mathcal{C}_t^{\ell}( x_{t-1} ).
\end{array}
$$ 
To describe and analyze the algorithm, it is convenient to introduce the function ${\underline{\mathfrak{Q}}}_t^k: \mathcal{X}_{t-1} \small{\times} \Theta_t \rightarrow \mathbb{R}$ given by
\begin{equation}\label{defmatfrak}
{\underline{\mathfrak{Q}}}_t^k( x_n , \xi_m ) = \left\{
\begin{array}{l}
\inf_{x_m} \;F_t^k(x_n, x_m, \xi_m)\\ 
x_m \in X_t(x_n, \xi_m)
\end{array} 
\right.
\end{equation}
where
\begin{equation}
F_t^k(x_n, x_m, \xi_m) = f_t(x_n, x_m, \xi_m) + \mathcal{Q}_{t+1}^k( x_m ). 
\end{equation}
The SDDP-REG algorithm is given below:\\
\par {\bf SDDP-REG (Regularized SDDP).}\\
\begin{itemize}
\item[Step 1)] {\textbf{Initialization.}} Let $\mathcal{Q}^{0}_t: \cX_{t-1} \to \mathbb{R} \cup \{-\infty\},\,t=2,\ldots,T$, 
satisfying $\mathcal{Q}^{0}_t \leq \mathcal{Q}_t$ be given and $\mathcal{Q}^{0}_{T+1} \equiv 0$. Set 
$\mathcal{C}_t^{0}=\mathcal{Q}^{0}_t, t=2,\ldots,T+1$, $k=1$.
\item[Step 2)] {\textbf{Forward pass.}} \\
Sample a scenario $(\xi_1,\tilde \xi_2^k,\ldots,\tilde \xi_T^k)$ from the distribution
of $\xi^k=(\xi_1,\xi_2^k,\ldots,\xi_T^k) \sim (\xi_1,\xi_2,\ldots,\xi_T)$ corresponding to a set of nodes $(n_1,n_2^k,\ldots,n_T^k)$.\\
{\textbf{For }}$t=1,\ldots,T$,\\
\hspace*{0.8cm}Find an optimal solution $x_t^k$ of
\begin{equation} \label{defxtkj}
\left\{
\begin{array}{l}
\displaystyle \inf_{x_t} \; {\bar F}_t^{k-1}(x_t, x_{t-1}^k , x_t^{P, k}, {\tilde \xi}_t^k) \\
x_t \in X_t( x_{t-1}^k, {\tilde \xi}_t^k ),
\end{array}
\right.
\end{equation}
\hspace*{2.4cm}where $x_{0}^k = x_0$, $x_t^{P, k}$ is any point in $\mathcal{X}_t$ and where\\
\hspace*{2.4cm}${\bar F}_t^{k-1}$ is the function given by
\begin{equation} \label{objforward}
 \begin{array}{llll}
\hspace*{1.5cm}&{\bar F}_t^{k-1}(x_t, x_{t-1} , x_t^{P}, \xi_t) &=& f_t(x_{t-1} , x_t, \xi_{t}) + \mathcal{Q}_{t+1}^{k-1}(x_t ) \\
&&&+ \lambda_{t, k}\|x_t - x_t^{P}\|^2,
\end{array}
\end{equation}
\hspace*{2.4cm}with $\lambda_{t, k}=0$ if $t=T$ or $k=1$.\\
{\textbf{End For}}
\item[Step 3)] {\textbf{Backward pass.}}\\
Set $\theta_{T+1}^k=0$ and $\beta_{T+1}^k=0$.\\
{\textbf{For }}$t=T,\ldots,2$,\\
\hspace*{0.8cm}{\textbf{For }}every child node $m$ of $n=n_{t-1}^k$ solve
$$
{\underline{\mathfrak{Q}}}_t^k( x_n , \xi_m ) = \left\{
\begin{array}{l}
\inf_{x_m} \;F_t^k(x_n , x_m, \xi_m)\\ 
x_m \in X_t(x_n, \xi_m)
\end{array}
\right.
$$
\hspace*{1.8cm}and compute, using Proposition \ref{pr:DPP-parta}, a subgradient\\
\hspace*{1.8cm}$\pi_m^k \in \partial {\underline{\mathfrak{Q}}}_t^k(\cdot, \xi_m)$ at $x_n^k$.\footnote{Note that the proposition can be applied because Assumption (H2) holds and thus the assumptions of the proposition
are satisfied for value function ${\underline{\mathfrak{Q}}}_t^k( \cdot , \xi_m )$.}\\
\hspace*{0.8cm}{\textbf{End For}}\\
\hspace*{0.8cm}The new cut $\mathcal{C}_t^k$ is obtained  computing
\begin{equation}\label{formulathetak}
\theta_t^k=\sum_{m \in C(n)} p_m {\underline{\mathfrak{Q}}}_t^k( x_n^k , \xi_m ),\;\;\beta_t^k=\sum_{m \in C(n)} p_m  \pi_m^k.
\end{equation}
{\textbf{End For}}
\item[Step 4)] Do $k \leftarrow k+1$ and go to Step 2).
\end{itemize}

\subsection{On the prox-centers and penalizations}\label{proxpenaliz}

Though $x_t^{P,k}$ are now random variables, the remarks of section \ref{convreddp} on the choice of the prox-centers
for DDP-REG still apply for SDDP-REG.
Indeed, convergence of SDDP-REG holds for any sequence $(x_t^{P,k})_{k \geq 2}$ of prox-centers in $\mathcal{X}_t$
and of penalty parameters $\lambda_{t, k}$ converging to zero for every $t$,
but the performance of the method depends on how these sequences are chosen.
The following choices for $\lambda_{t, k}$ and $x_t^{P,k}$ will be used in our numerical tests of SDDP-REG:
\begin{itemize}
\item Weighted average of previous values: $x_t^{P,k}=\frac{1}{\Gamma_{t, k}} \sum_{j=1}^{k-1} \gamma_{t, k, j} x_t^j$
with $\gamma_{t, k, j}$ nonnegative weights and $\Gamma_{t, k}=\sum_{j=1}^{k-1} \gamma_{t, k, j}$. As a special case,
$x_t^{P,k}=x_t^{k-1}$
was used in \cite{powellasamov}.
\item  $\lambda_{t, k} = \rho_t^k$ where $0 < \rho_t < 1$ (used in \cite{powellasamov} with $\rho_t$ constant) or 
$\lambda_{t, k} = \frac{1}{k^2}$ for  $t<T$, $k \geq 2$.
\end{itemize}

\section{Convergence analysis of SDDP-REG}\label{convsreda}

The approximate recourse functions $\mathcal{Q}_{t}^{k-1}$ available 
at the end of iteration $k-1$ of SDDP-REG define a policy allowing us to compute
decisions $x_n^k$ for every node $n$ of the scenario tree with the following
loops:
\rule{\linewidth}{1pt}
{\textbf{Simulation of SDDP-REG in the end of iteration $k-1$.}}\\
\rule{\linewidth}{1pt}
{\textbf{For }}$t=1,\ldots,T$,\\
\hspace*{0.8cm}{\textbf{For }}every node $n$ of stage $t-1$,\\
\hspace*{1.6cm}{\textbf{For }}every child node $m$ of node $n$, compute an optimal solution $x_m^k$ of
\begin{equation} \label{defxtkj}
\left\{
\begin{array}{l}
\displaystyle \inf_{x_m} \; {\bar F}_t^{k-1}(x_n^k, x_m , x_t^{P, k}, \xi_m) \\
x_m \in X_t( x_n^k, \xi_m ),
\end{array}
\right.
\end{equation}
\hspace*{2.4cm}where $x_0^k=x_0$.\\
\hspace*{1.6cm}{\textbf{End For}}\\
\hspace*{0.8cm}{\textbf{End For}}\\
{\textbf{End For}}\\
\rule{\linewidth}{1pt}

We will assume that the sampling procedure in SDDP-REG satisfies the following property:\\
\begin{itemize}
\item[(H3)] for every $j=1,\ldots,M$, for every $t=2,\ldots,T$, and for every $k \in \mathbb{N}^*$,
$
\mathbb{P}(\xi_t^k = \xi_{t, j})=\mathbb{P}(\xi_t = \xi_{t, j})>0.
$ For every 
$t=2,\ldots,T$, and $k \geq 1$,
$$
\xi_t^k \mbox{ is independent on }\sigma(\xi_2^1,\ldots,\xi_T^1,\ldots,\xi_{2}^{k-1}, \ldots, \xi_{T}^{k-1},\xi_2^k,\ldots,\xi_{t-1}^k).
$$
\end{itemize}

The following lemma will be useful in the sequel:

\begin{lemma}\label{lipqtk}
Consider the sequences $\mathcal{Q}_t^k, \theta_t^k$, and $\beta_t^k$ generated by SDDP-REG.
Under Assumptions (H1), (H2), then almost surely, for $t=2,\ldots,T+1$, the following holds:
\begin{itemize}
\item[(a)] $\mathcal{Q}_t^k$ is convex with
$\mathcal{Q}_t^{k} \leq \mathcal{Q}_{t}$ on 
$\mathcal{X}_{t-1}^{\varepsilon}$ for all $k \geq 1$;
\item[(b)] the sequences
$(\theta_t^k)_{k \geq 1}$ and  $(\beta_t^k)_{k \geq 1}$ are bounded;
\item[(c)] for $k \geq 1$,
$\mathcal{Q}_t^k$ is Lipschitz continuous on 
$\mathcal{X}_{t-1}^{\varepsilon}$.
\end{itemize}
\end{lemma}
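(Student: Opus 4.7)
The natural plan is to proceed by backward induction on $t$ from $T+1$ down to $2$, establishing (a), (b), and (c) simultaneously at each stage. The base case $t=T+1$ is trivial since $\mathcal{Q}_{T+1}^k \equiv 0$, which is convex, Lipschitz with constant $0$, and satisfies $\mathcal{Q}_{T+1}^k \leq \mathcal{Q}_{T+1}$; no cuts are generated at this stage, so $\theta_{T+1}^k = 0$ and $\beta_{T+1}^k = 0$. For the inductive step, I would assume (a)--(c) hold at stage $t+1$ and derive them at stage $t$.

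For part (a), convexity of $\mathcal{Q}_t^k$ follows from a secondary induction on $k$: it is the pointwise maximum of the convex function $\mathcal{Q}_t^{k-1}$ and the affine cut $\mathcal{C}_t^k$. The main point is cut validity. I would introduce the auxiliary function $\phi_t^k(x) := \sum_{m \in C(n_{t-1}^k)} p_m\,{\underline{\mathfrak{Q}}}_t^k(x,\xi_m)$, which by interstage independence (H1) equals $\mathbb{E}[{\underline{\mathfrak{Q}}}_t^k(x,\xi_t)]$. The inductive hypothesis $\mathcal{Q}_{t+1}^k \leq \mathcal{Q}_{t+1}$ yields ${\underline{\mathfrak{Q}}}_t^k(\cdot,\xi_m) \leq \mathfrak{Q}_t(\cdot,\xi_m)$, hence $\phi_t^k \leq \mathcal{Q}_t$ on $\mathcal{X}_{t-1}^\varepsilon$ (where both are well-defined and finite by (H2)-4 and Proposition \ref{convexityrec}). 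Since (H2)-5 provides the Slater-type point needed for Proposition \ref{pr:DPP-parta} to apply to each ${\underline{\mathfrak{Q}}}_t^k(\cdot,\xi_m)$, the vectors $\pi_m^k$ are genuine subgradients at $x_{t-1}^k$, so $\beta_t^k \in \partial \phi_t^k(x_{t-1}^k)$ and $\mathcal{C}_t^k(x) \leq \phi_t^k(x) \leq \mathcal{Q}_t(x)$ for every $x \in \mathcal{X}_{t-1}^\varepsilon$.

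For part (b), the upper bound $\theta_t^k = \phi_t^k(x_{t-1}^k) \leq \mathcal{Q}_t(x_{t-1}^k)$ follows from finiteness of $\mathcal{Q}_t$ on the compact $\mathcal{X}_{t-1}$. A lower bound on $\theta_t^k$ comes from decomposing each ${\underline{\mathfrak{Q}}}_t^k(x_{t-1}^k,\xi_m) = f_t(x_{t-1}^k,x_m^k,\xi_m) + \mathcal{Q}_{t+1}^k(x_m^k)$: the first term is bounded below because $f_t(\cdot,\cdot,\xi_{t,j})$ is convex lsc and finite on the compact $\mathcal{X}_{t-1}\times\mathcal{X}_t$ by (H2)-4.1, and the second is bounded because by the outer induction $\mathcal{Q}_{t+1}^k$ is Lipschitz (hence bounded) on $\mathcal{X}_t^\varepsilon$. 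To bound $\beta_t^k$, I would exploit the global inequality $\mathcal{C}_t^k \leq \mathcal{Q}_t$ on $\mathcal{X}_{t-1}^\varepsilon$: since $x_{t-1}^k \in \mathcal{X}_{t-1}$, for any unit vector $u$ and any $\hat\varepsilon < \varepsilon$ the point $x_{t-1}^k + \hat\varepsilon u$ belongs to $\mathcal{X}_{t-1}^\varepsilon$, whence $\hat\varepsilon\langle\beta_t^k,u\rangle \leq \mathcal{Q}_t(x_{t-1}^k+\hat\varepsilon u) - \theta_t^k$, giving a uniform deterministic bound $L_t$ on $\|\beta_t^k\|$ depending only on the compact data and on the Lipschitz constant of $\mathcal{Q}_t$.

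Part (c) is then immediate: $\mathcal{Q}_t^k$ is the supremum of finitely many affine functions $\mathcal{C}_t^\ell$ with slopes bounded by the constant $L_t$ of part (b), hence is $L_t$-Lipschitz on $\mathcal{X}_{t-1}^\varepsilon$ (taking $\mathcal{Q}_t^0$ to be a suitable finite affine lower bound, or arguing for $k$ large enough that at least one valid cut has been recorded). The principal obstacle I anticipate is the bound on $\beta_t^k$: a priori it could depend on the random trial point $x_{t-1}^k$, but the probing argument above shows it actually depends only on deterministic data (the diameter of $\mathcal{X}_{t-1}$, the Lipschitz/finiteness constants of $\mathcal{Q}_t$ and $f_t$, and the Lipschitz constant inherited at stage $t+1$), which is precisely what is required to close the induction and to obtain the ``almost surely'' (in fact surely) uniform statements of (b) and (c).
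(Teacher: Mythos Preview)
Your proposal is correct and follows essentially the same approach as the paper: backward induction on $t$, cut validity via the chain $\mathcal{Q}_t \geq \sum_m p_m\,{\underline{\mathfrak{Q}}}_t^k(\cdot,\xi_m) \geq \mathcal{C}_t^k$ using Proposition~\ref{pr:DPP-parta} under (H2)-5), and a uniform bound on the cut slopes obtained from finiteness of $\mathcal{Q}_t$ on the fattened set $\mathcal{X}_{t-1}^\varepsilon$. Your probing argument for $\|\beta_t^k\|$ and your remark about $\mathcal{Q}_t^0$ make explicit points the paper leaves implicit (the latter is in fact flagged in the paper's footnote), but there is no substantive difference in strategy.
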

\begin{proof} The proof is similar to the proof of Lemma 3.2 in \cite{guigues2015siopt}.\footnote{In \cite{guigues2015siopt}
a forward, instead of a forward-backward algorithm, is considered. In this setting, finiteness of coefficients $\theta_t^k$ and $\beta_t^k$ is not guaranteed
for the first iterations (for instance $(\theta_t^1)_t$ are $-\infty$) but the proof is similar.}
We give the main steps of the proof which is by backward induction on $t$ starting with $t=T+1$ where the statement holds by definition
of $\mathcal{Q}_{T+1}$. Assuming for $t \in \{2,\ldots,T\}$ that $\mathcal{Q}_{t+1}^k$ is Lipschitz
continuous on $\mathcal{X}_{t}^{\varepsilon}$ with $\mathcal{Q}_{t+1}^k \leq \mathcal{Q}_{t+1}$,
then setting $n=n_{t-1}^k$, for every $m \in C(n)$ we have $\mathfrak{Q}_t(\cdot, \xi_m) \geq {\underline{\mathfrak{Q}}}_t^k (\cdot, \xi_m)$ which gives
$$
\begin{array}{lll}
 \mathcal{Q}_t(x_{t-1}) &= & \displaystyle \sum_{m \in C(n)} p_m \mathfrak{Q}_t( x_{t-1}, \xi_m )\\
 & \geq & \displaystyle \sum_{m \in C(n)} p_m {\underline{\mathfrak{Q}}}_t^k ( x_{t-1}, \xi_m )\\
 & \geq & \displaystyle \sum_{m \in C(n)} p_m \Big( {\underline{\mathfrak{Q}}}_t^k ( x_{n}^k, \xi_m )  + \langle \pi_m^k , x_{t-1}  - x_n^k \rangle \Big) = \mathcal{C}_t^k(x_{t-1} )   ,                        
\end{array}
$$
where for the last inequality, we have used Proposition \ref{pr:DPP-parta} which can be applied since Assumption (H2)-5) holds.
Therefore, $\mathcal{C}_t^k$ defines a valid cut for $\mathcal{Q}_t$ and $\mathcal{Q}_t \geq \mathcal{Q}_t^k$.
Assumptions (H2)-1)-4) and finiteness of $\mathcal{Q}_t$ on $\mathcal{X}_{t-1}^{\varepsilon}$ imply that 
${\underline{\mathfrak{Q}}}_t^k ( x_{n}^k, \xi_m )$ and $\pi_m^k$  are bounded for every $m \in C(n)$,
and allow us to obtain a uniform upper bound
on $\beta_t^k$, i.e., a Lipschitz constant valid for all functions $\mathcal{Q}_t^k, t=2,\ldots,T+1, k\geq 1$.\hfill
\end{proof}

$\vphantom{}$\\Theorem \ref{convalg1}  shows the convergence of the sequence ${\underline{\mathfrak{Q}}}_{1}^k(x_0, \xi_1)$ 
to the optimal value $\mathcal{Q}_{1}(x_0)$ of \eqref{pbinit0} 
and that 
any accumulation point of the sequence $((x_n^k)_{n \in \mathcal{N}})_{k \geq 1}$ 
can be used to define an optimal solution 
of \eqref{pbinit0}.
\begin{theorem}[Convergence analysis of SDDP-REG]\label{convalg1}
Consider the sequences of stochastic decisions $x_n^k$ and of recourse functions $\mathcal{Q}_ t^k$
generated by SDDP-REG to solve dynamic programming equations  \eqref{definitionQt}-\eqref{defmathfrak}-\eqref{firststagepb}.
Let Assumptions (H1), (H2), and (H3) hold and assume that $\lambda_{T,k}=0$ and that 
for every $t=1,\ldots,T-1$, we have
$\lim_{k \rightarrow +\infty} \lambda_{t, k}=0$. Then
\begin{itemize}
\item[(i)] almost surely, for $t=2,\ldots,T+1$, the following holds:
$$
\mathcal{H}(t): \;\;\;\forall n \in {\tt{Nodes}}(t-1), \;\; \displaystyle \lim_{k \rightarrow +\infty} \mathcal{Q}_{t}(x_{n}^{k})-\mathcal{Q}_{t}^{k}(x_{n}^{k} )=0.
$$
\item[(ii)]
Almost surely, the limit of the sequence
$( {\bar F}_1^{k-1}(x_0, x_{n_1}^k , x_1^{P, k}, \xi_1) )_k$ of the approximate first stage optimal values
and of the sequence
$({\underline{\mathfrak{Q}}}_1^{k}(x_{0}, \xi_1))_k$
is the optimal value 
$\mathcal{Q}_{1}(x_0)$ of \eqref{pbinit0}. 
Also, let $(x_n^*)_{n \in \mathcal{N}}$ be any accumulation point
of the sequence $((x_n^k)_{n \in \mathcal{N}})_{k \geq 1}$. 
Now define $x_1,\ldots,x_T$ with $x_t: \mathcal{Z}_t \rightarrow \mathbb{R}^n$ by
$x_t( \xi_1, \ldots, \xi_t  )=x_{m}^*$ where $m$ is given by $\xi_{[m]}=(\xi_1,\ldots,\xi_t)$.
Then $(x_1,\ldots,x_T)$ is an optimal solution to \eqref{pbinit0}.
\end{itemize}
\end{theorem}
\begin{proof} See the Appendix.\hfill
\end{proof}
$\vphantom{}$\\
\begin{remark}[Extension of SDDP-REG to risk-averse nonlinear problems.]\label{importantremark2}
Using \cite{guigues2015siopt}, SDDP-REG algorithm can be extended to nested risk-averse formulations of risk-averse
multistage stochastic nonlinear programs of form
\begin{equation} \label{sredara}
\begin{array}{ll}
\displaystyle \inf_{x_1 \in X_1(x_0, \xi_{1})}&f_{1}(x_{1},\,\xi_1) +  \rho_{2|\mathcal{F}_1}\left( \displaystyle \inf_{x_2 \in X_2(x_{1}, \xi_{2})}\;f_{2}(x_{1:2}, \xi_2 ) + \ldots \right.\\
&+\rho_{T-1|\mathcal{F}_{T-2}}\left( \displaystyle \inf_{x_{T-1} \in X_{T-1}(x_{T-2},\,\xi_{T-1})}\;f_{T-1}(x_{1:T-1}, \xi_{T-1}) \right.\\
&+ \rho_{T|\mathcal{F}_{T-1}}\left. \left. \left( \displaystyle \inf_{x_{T} \in X_{T}(x_{T-1},\,\xi_{T})}\;f_{T}(x_{1:T}, \xi_{T})  \right) \right) \ldots \right),
\end{array}
\end{equation}
where $\rho_{t+1|\mathcal{F}_t}: \mathcal{Z}_{t+1} \rightarrow \mathcal{Z}_{t}$ is a coherent and law invariant conditional risk measure.
The convergence proof of this variant of SDDP-REG can be easily obtained
combining the convergence proof of risk-averse decomposition methods from \cite{guigues2015siopt}
with the convergence proof of Theorem \ref{convalg1}.
\end{remark}
$\vphantom{}$\\
Similary to DDP-REG, if for a given stage $t$, $\mathcal{X}_t$ is a polytope and we do not have the nonlinear constraints
given by constraint functions $g_t$ (i.e., the constraints for this stage are linear), then the conclusions of 
Proposition \ref{convexityrec}, Lemma \ref{lipqtk},
and Theorem \ref{convalg1} hold under weaker assumptions. 
More precisely, for such stages $t$, we assume (H2)-1),
(H2)-2), and instead of (H2)-4), (H2)-5), the weaker assumption (H2)-3'): there exists $\varepsilon>0$ such that 3.1') for every $j=1,\ldots,M$,
$
\mathcal{X}_{t-1}^{\varepsilon}{\small{\times}} \mathcal{X}_{t}  \subset \mbox{dom} \; f_t\Big(\cdot, \cdot, \xi_{t, j}\Big);
$
and  3.2') for every $j=1,\ldots,M$, for every
$x_{t-1} \in \mathcal{X}_{t-1}$, the set $X_t(x_{t-1}, \xi_{t, j})$ is nonempty.

\section{Multistage portfolio optimization models with direct transaction and market impact costs} \label{FINM}

\subsection{Multistage portfolio selection models with direct transaction costs}\label{linearmodels}

This section presents risk-neutral and risk-averse multistage portfolio optimization models with direct transaction costs over a discretized horizon
of $T$ stages. We model the direct transaction costs incurred by 
selling and purchasing securities as being proportional to the amount of the transaction {\cite{FILE}.

Let $n$ be the number of risky assets and asset $n+1$ be cash. 
Next $x_t^i$ is the dollar value of asset $i=1,\ldots,n+1$ at the end of stage $t=1,\ldots,T$, 
$\xi_t^i$ is the return of asset $i$ at $t$, 
$y_t^i$ is the amount of asset $i$ sold at the end of $t$, 
$z_t^i$ is the amount of asset $i$ bought at the end of $t$, 
$\eta_i > 0$ and $\nu_i > 0$ are respectively the proportional selling and purchasing transaction costs.
Each component $x_0^i,i=1,\ldots,n+1$, of $x_0$ is a known parameter. 
The expression $\sum_{i=1}^{n+1} \xi_{1}^i x_{0}^i$ is the budget available at the start of the investment planning horizon. 
The notation $u_i$ is a parameter defining the maximal amount that can be invested in each financial security $i$. To allow for a direct application of SDDP-REG to solve the problem, we will assume that 
returns $\xi_t$ are interstage independent. Simple modifications of SDDP-REG could be 
used to deal with returns following generalized linear models with finite memory (as in \cite{guiguescoap2013}) and with Markov chain approximations of the returns (as in \cite{mogrundt}).

For $t=1,\ldots,T$, given a portfolio $x_{t-1}=(x_{t-1}^1,\ldots,x_{t-1}^n, x_{t-1}^{n+1})$ and $\xi_t$, we define the set $X_t(x_{t-1}, \xi_t)$
as the set of $(x_t, y_t, z_t) \in \mathbb{R}^{n+1} \small{\times} \mathbb{R}^{n} \small{\times} \mathbb{R}^{n}$ satisfying
\begin{equation}
x_t^{n+1}=\xi_{t}^{n+1} x_{t-1}^{n+1}   +\sum\limits_{i=1}^{n} \left((1-\eta_i)y_t^i-  (1+\nu_i)z_t^i\right), \label{p1_3}
\end{equation}
and for $i=1,\ldots,n$,
\begin{subequations}\label{MI-CONS0}
\begin{align}
  x^i_{t}&= \xi_{t}^i x_{t-1}^i-y_t^i+z_t^i,  \label{p1_2}\\
	y_t^i &\leq  \xi_{t}^{i} x_{t-1}^{i},  \label{p1_4}\\
  x_t^i &\leq u_i  \sum\limits_{i=1}^{n+1} \xi_{t}^i x_{t-1}^i,  \label{p1_5}\\
	x_t^i, y_t^i, z_t^i & \ge  0.  \label{p1_8}
\end{align}
\end{subequations}
Constraints \eqref{p1_2} define the amount of security $i$ held at each stage $t$ and take into account the proportional transaction costs.
Constraints \eqref{p1_3} are the cash flow balance constraints and define how much cash is available at each stage.
Constraints \eqref{p1_4} preclude selling an amount larger than the one held. 
Constraints \eqref{p1_5} do not allow the position in security $i$ at time $t$ to exceed a specified limit $u_i$, while 
\eqref{p1_8} prevents short-selling and enforces the non-negativity of the amounts purchased and~sold.\\ 

\par {\textbf{Risk-neutral model.}} With this notation, the dynamic programming equations of a risk-neutral portfolio model
of form \eqref{definitionQt}, \eqref{defmathfrak}, \eqref{firststagepb} can be written\footnote{It is indeed immediately seen that \eqref{eq1dp}-\eqref{eq2dp} is of form \eqref{definitionQt}, \eqref{defmathfrak}, \eqref{firststagepb},
writing the maximization problems as minimization problems and introducing the extended state $s_t=(x_t, y_t, z_t)$.}:
for $t=T$, setting $\mathcal{Q}_{T+1}( x_T ) = \mathbb{E}[ \sum\limits_{i=1}^{n+1} \xi_{T+1}^i x_{T}^i ]$
we solve the problem
\begin{equation}\label{eq1dp}
\mathfrak{Q}_T \left( x_{T-1}, \xi_T \right)=
\left\{
\begin{array}{l}
\text{Max} \; \mathcal{Q}_{T+1}( x_T )  \\
(x_T, y_T, z_T) \in X_T(x_{T-1}, \xi_T),
\end{array}
\right.
\end{equation}
while at stage $t=T-1,\dots,1$, we solve
\begin{equation}\label{eq2dp}
\mathfrak{Q}_t\left( x_{t-1}, \xi_t  \right)=
\left\{
\begin{array}{l}
\text{Max}  \;  Q_{t+1}\left( x_{t} \right) \\
(x_t, y_t, z_t) \in X_t(x_{t-1}, \xi_t) , 
\end{array}
\right.
\end{equation}
where for $t=2,\ldots,T$, $\mathcal{Q}_t(x_{t-1})=\mathbb{E}[ \mathfrak{Q}_t\left( x_{t-1}, \xi_t  \right) ]$.
With this model, we maximize the expected return of the portfolio taking into account the transaction costs,
non-negativity constraints, and bounds imposed on the different securities.\\

\par {\textbf{Risk-averse model.}} As we recall from the previous section,
SDDP-REG can be easily  extended to solve risk-averse problems of form \eqref{sredara}.
We can therefore define a nested risk-averse counterpart of the risk-neutral portfolio problem
we have just introduced and solve it with SDDP-REG. This model is obtained replacing the expectation in the risk-neutral portfolio
problem above by the (unconditional, due to Assumption (H1)) risk measure $\rho_t:\mathcal{Z}_t \rightarrow \mathbb{R}$ given by
$$
\rho_t \left[ Z \right] = (1-\kappa_t) \mathbb{E}\left[ Z \right] + \kappa_t \avr_{\alpha_t} \left[ Z \right] ,
$$
where $\kappa_t \in (0,1)$, ${\alpha_t} \in (0,1)$ is the confidence level of the Average Value-at-Risk, and 
$\rho_t$ is computed with respect to the distribution of $\xi_t$.
Therefore, a risk-averse portfolio problem with direct transaction costs is written as follows:
at stage $T$,  setting $\mathcal{Q}_{T+1}( x_T ) = \rho_{T+1} [ \sum\limits_{i=1}^{n+1} \xi_{T+1}^i x_{T}^i ]$,
we~solve 
\begin{equation}\label{eq1dpra}
\mathfrak{Q}_T \left( x_{T-1}, \xi_T \right)=
\left\{
\begin{array}{l}
\text{Max} \; \mathcal{Q}_{T+1}( x_T )  \\
(x_T, y_T, z_T) \in X_T(x_{T-1}, \xi_T),
\end{array}
\right.
\end{equation}
while at stage $t=T-1,\dots,1$, we solve
\begin{equation}\label{eq2dpra}
\mathfrak{Q}_t\left( x_{t-1}, \xi_t  \right)=
\left\{
\begin{array}{l}
\text{Max}  \; Q_{t+1}\left( x_{t} \right) \\
(x_t,y_t, z_t) \in X_t(x_{t-1}, \xi_t),
\end{array}
\right.
\end{equation}
where for $t=2,\ldots,T$, $\mathcal{Q}_t(x_{t-1})=\rho_t [ \mathfrak{Q}_t\left( x_{t-1}, \xi_t  \right) ]$.

\subsection{Conic quadratic models for multistage portfolio selection with market impact costs}\label{marketimpcostmodel}


Due to market imperfections, securities can seldom be traded at their current theoretical market price, which leads to additional costs, called market impact costs. 
If the trade is very large and involves the purchase (resp., selling) of a security, the price of the share may rise (resp., drop) between the placement of the trade and the completion of 
its execution \cite{MORO}. As more of a security is bought or sold, the proportional cost increases due to the scarcity effect. 
Market impact costs are particularly important for large institutional investors, for which they represent a major proportion of the total transaction costs \cite{MIBR,TORRE}. 
Often, large trading orders are not executed at once, but are instead split into a sequence of smaller orders executed within a given time window. Taken individually, these small orders exert little or no pressure on the market \cite{ZAKA}, which can curb market impact costs. The downside is that the execution of the entire trade order is postponed, which may lead to a loss in opportunities caused by (unfavorable) changes in market prices. 

The change in a security price is impacted by the size of the transaction and is often modelled as a concave monotonically increasing function of the trade size \cite{ATH}. 
In that vein, Lillo et al. \cite{LILLO} and Gabaix et al. \cite{GGPS} model market impact costs as a concave power law function of the transaction size. Bouchaud et al. \cite{BGPW} use a logarithmic function of the transaction size and assert that the market impact is temporary and decays as a power law. 
Moazeni et al. \cite{MOAZENI} propose linear market impact costs and evaluate the sensitivity of optimal execution strategies with respect to errors in the estimation of the parameters.
Mitchell and Braun \cite{MIBR} study the standard portfolio selection problem in which they incorporate convex transaction costs, including market impact costs, incurred 
when rebalancing the portfolio. They rescale the budget available after paying transaction costs, which results into a fractional problem that can be reformulated as a convex one. 
Frino et al. \cite{FBWL} approximate impact costs with a linear regression based on quantized transaction sizes, while Zagst and Kalin \cite{ZAKA} use a piecewise linear function. 
Loeb \cite{LOEB} shows that market impact costs are a function of the square root of the amount traded. 
Similarly, Torre \cite{TORRE} models the price change as proportional to the square root of the order size. This led to the so-called {\it square-formula} which defines the market impact costs as proportional to the square root of the ratio of the number of shares traded to the average daily trading volume of the security \cite{GATHERAL}. 
The square-root formula is widely used in the financial practice \cite{GATHERAL} to provide a pre-trade estimate of market impact costs and is preconized by Andersen et al. \cite{ANDERSEN} as well as  by Grinold and Kahn \cite{GRKA}. 
The latter observe that this approach is consistent with the trading rule-of-thumb according to which it costs roughly one day's volatility to trade one day's volume. 
In Barra's Market Impact Model Handbook \cite{TORRE}, it is showed  that the square-root formula fits transaction cost data remarkably well. 
An empirical study conducted by Almgren et al. \cite{ATH} advocates to set the price change as proportional to a 3/5 power law of block~size. 

In this study, the modeling of the market impact costs is based on the square-formula. 
More precisely, we follow the approach proposed by Grinold and Kahn \cite{GRKA} and Andersen \cite{ANDERSEN}, and model the market impact costs as proportional to a 3/2 power law of the transacted amount (see \eqref{TMIC}).  

Let $\alpha_t^i$ be the volume of security $i$ in the considered transaction and $\gamma_t^i$ be the overall market volume for security $i$ at $t$. Additionally, $g_t^i$ is the monetary value of asset $i$ transacted at $t$.
The market impact costs for asset $i$ are defined as:
\begin{equation}
\label{MIC}
\theta^i_t \sqrt{\frac{\alpha^i_t}{\gamma^i_t}} \approx m^i_t \sqrt{\left| g^i_t \right|} \; ,
\end{equation}
where $\theta^i_t$ and $m^i_t$ are non-negative parameters that must be estimated.  The market impact costs capture the fact that the price of an asset increases or decreases if one buys or sells very many shares of this asset. 
The total market impact costs depend on both the cost per unit $m^i_t$ and the square root of the amount traded $g^i_t$ (which is aligned with the empirical tests reported in \cite{LOEB}): 
\vspace{-0.035in}
\begin{equation}
\label{TMIC}
g^i_t m^i_t  \sqrt{\left| g^i_t \right|} \; .
\end{equation}
For $t=1,\ldots,T$, given a portfolio $x_{t-1}=(x_{t-1}^1,\ldots,x_{t-1}^n, x_{t-1}^{n+1})$ and $\xi_t$, we define 
\vspace{-0.05in}
{\small 
\begin{equation}\label{setmimpact}
X_t^{\mathcal{M} \mathcal{I}}(x_{t-1}, \xi_t)=
\left\{
\begin{array}{ll}
(x_t, y_t, z_t, q_t, g_t) \in \mathbb{R}_{+}^{n+1} \small{\times} \mathbb{R}_{+}^{n} \small{\times} \mathbb{R}_{+}^{n} \small{\times} \mathbb{R}_{+}^{n} \small{\times} \mathbb{R}_{+}^{n}:&\\
\eqref{p1_2}-\eqref{p1_5}, i=1,\ldots,n,&\\
x_t^{n+1}= \xi_{t}^{n+1} x_{t-1}^{n+1}   +\sum\limits_{i=1}^{n}(y_t^i - z_t^i - q_t^i), &(a)\\
g_t^i = y_t^i+z_t^i,\;  i=1,\ldots,n,&(b)\\
g_t^i m_t^i   \sqrt{g_t^i} \leq q_t^i ,\;  i=1,\ldots,n & (c)  
\end{array}
\right\}.
\end{equation}
}
Constraints \eqref{setmimpact}-(a)  define how much cash is held at each period and take into account the market impact costs. 
Constraints \eqref{setmimpact}-(b)  define the total amount $g_t^i$ of security $i$ traded at time $t$. 
The nonlinear constraints \eqref{setmimpact}-(c) follow from \eqref{TMIC} and permit to define the total market impact costs $q_t^i$ incurred for security $i$ at time $t$. Same as in the previous section, we assume that returns $(\xi_t)$ are interstage independent.

The risk-neutral multistage portfolio optimization problem with market impact costs writes as follows: 
for $t=T$, setting $\mathcal{Q}_{T+1}( x_T ) = \mathbb{E}[ \sum\limits_{i=1}^{n+1} \xi_{T+1}^i x_{T}^i ]$
we solve the problem
\begin{equation}\label{eq1dpm}
\mathfrak{Q}_T \left( x_{T-1}, \xi_T \right)=
\left\{
\begin{array}{l}
\text{Max} \; \mathcal{Q}_{T+1}( x_T )  \\
(x_T, y_T, z_T, q_T, g_T) \in X_T^{\mathcal{M} \mathcal{I}}(x_{T-1}, \xi_T),
\end{array}
\right.
\end{equation}
while at stage $t=T-1,\dots,1$, we solve
\begin{equation}\label{eq2dpm}
\mathfrak{Q}_t\left( x_{t-1}, \xi_t  \right)=
\left\{
\begin{array}{l}
\text{Max}  \; \mathbb{E}\left[ Q_{t+1}\left( x_{t} \right) \right] \\
(x_t, y_t, z_t, q_t, g_t) \in X_t^{\mathcal{M} \mathcal{I}}(x_{t-1}, \xi_t) , 
\end{array}
\right.
\end{equation}
where for $t=2,\ldots,T$, $\mathcal{Q}_t(x_{t-1})=\mathbb{E}[ \mathfrak{Q}_t\left( x_{t-1}, \xi_t  \right) ]$.

It is easy to see that the left-hand side of the constraints \eqref{setmimpact}-(c) are convex functions, which 
implies that Assumptions (H2)-3) are satisfied and SDDP-REG can be applied to solve the portfolio problem
under consideration. For implementation purposes, it is convenient to rewrite constraints 
\eqref{setmimpact}-(c) as a conic quadratic constraint:
\begin{theorem}
\label{TH1}
For $t=1,\ldots,T$, the convex feasible sets 
\[
\mathcal{S}_t = 
\left\{
\begin{array}{l}
(g_t, q_t)=(g_t^1,\ldots,g_t^n,q_t^1,\ldots,q_t^n) \in \mathbb{R}_+^n \small{\times}\mathbb{R}_+^n : \\
g_t^i m_t^i \sqrt{g_t^i} \leq q_t^i,\,i=1,\ldots,n  
\end{array}
\right\}
\]
can be equivalently represented with the rotated second-order constraints \eqref{E1} and the linear constraints
\eqref{E3}-\eqref{E22}:
\begin{subequations}\label{MI-CONS}
  \begin{align}
 &(\ell_t^i)^2 \leq 2 s_t^i \frac{q_t^i} {m_t^i},  \quad   
(w_t^i)^2 \leq 2 v_t^i r_t^i, \;        & i=1,\ldots,n,  \label{E1} \\    
 &\ell_t^i = v_t^i, \quad s_t^i = w_t^i,       & i=1,\ldots,n, \label{E3} \\     
&r_t^i = 0.125, \quad 
s_t^i, v_t^i \geq 0,        & i=1,\ldots,n,   \label{E6}\\
&-g_t^i \leq \ell_t^i, \quad 
g_t^i  \leq \ell_t^i   & i=1,\ldots,n. \label{E22}
 \end{align}
\end{subequations}
\end{theorem}
\begin{proof} This representation is proved in \cite{ANDERSEN}.
\end{proof}

$\vphantom{}$\\For $t=1,\ldots,T$, given $x_{t-1} \in \mathbb{R}^{n+1}$ and $\xi_t$, denoting 
\begin{equation}\label{setmimpactmodified}
{\mathbb{X}}_t^{\mathcal{M} \mathcal{I}}(x_{t-1}, \xi_t)=
\left\{
\begin{array}{l}
(x_t, y_t, z_t, q_t, g_t,\ell_t, s_t, v_t, w_t) \in \\
\mathbb{R}_{+}^{n+1} \small{\times} \mathbb{R}_{+}^{n} \small{\times} \mathbb{R}_{+}^{n} \small{\times} \mathbb{R}_{+}^{n} \small{\times} \mathbb{R}_{+}^{n}\small{\times} \mathbb{R}_{+}^{n} \small{\times} \mathbb{R}_{+}^{n} \small{\times} \mathbb{R}_{+}^{n} \small{\times} \mathbb{R}_{+}^{n}:\\
\eqref{p1_2}-\eqref{p1_5}, i=1,\ldots,n,\\
x_t^{n+1}=\xi_{t}^{n+1} x_{t-1}^{n+1}   +\sum\limits_{i=1}^{n}(y_t^i - z_t^i - q_t^i), \\
g_t^i = y_t^i+z_t^i,\;  i=1,\ldots,n,\\
\eqref{E1}-\eqref{E22} ,\;  i=1,\ldots,n  
\end{array}
\right\}
\end{equation}
and using Theorem \ref{TH1}, our portfolio optimization problem with market impact costs 
\eqref{eq1dpm}-\eqref{eq2dpm} can be rewritten substituting in 
\eqref{eq1dpm}-\eqref{eq2dpm} for $t=1,\ldots,T$, the constraints
$(x_t, y_t, z_t, q_t, g_t) \in X_t^{\mathcal{M} \mathcal{I}}(x_{t-1}, \xi_t)$
by 
$$
(x_t, y_t, z_t, q_t, g_t, \ell_t, s_t, v_t, w_t) \in {\mathbb{X}}_t^{\mathcal{M} \mathcal{I}}(x_{t-1}, \xi_t).
$$
This formulation of the portfolio problem can be solved
using SDDP-REG with all subproblems of the forward and backward passes being conic quadratic
optimization problems.

\if{
\begin{corollary}
\label{COR1}
The nonlinear programming problem \eqref{MI} can be reformulated as the second-order cone programming problem including $2n$ conic constraints
\begin{align}
  \Max \ &\eqref{F1} \notag \\
  {\rm s.t.} \; & \eqref{p1_2} \ ; \ \eqref{p1_4}-\eqref{p1_8} \ ; \ \eqref{F3}-\eqref{F4} \ ; \ \eqref{E1}-\eqref{E22} \notag \ .
 \end{align}
\end{corollary}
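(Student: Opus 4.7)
The plan is to establish the claimed equivalence by eliminating the auxiliary variables $\ell_t^i, s_t^i, v_t^i, w_t^i, r_t^i$ in order to recover the original inequality $m_t^i g_t^i \sqrt{g_t^i} \leq q_t^i$, and conversely to construct explicit values of these auxiliaries whenever $(g_t, q_t) \in \mathcal{S}_t$. Throughout, the index $i$ can be treated separately since the constraints decouple across coordinates, so it suffices to prove the one-dimensional statement (I will drop the subscripts $t,i$ for clarity, writing $m, g, q, \ell, s, v, w, r$).

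First I would carry out the forward direction: assume that $(g,q,\ell,s,v,w,r)$ satisfies \eqref{E1}--\eqref{E22}. Using the equalities \eqref{E3}--\eqref{E5} to substitute $v=\ell$, $w=s$, $r=1/8$, the rotated SOC constraint \eqref{E2} reduces to $s^2 \leq \ell/4$, i.e., $4s^2 \leq \ell$ (note $\ell \geq 0$ follows as well). Combining this with \eqref{E1}, rewritten as $\ell^2 \leq 2s\,q/m$, yields
\[
\ell^4 \;\leq\; \bigl(2s\, q/m\bigr)^2 \;=\; 4s^2 \cdot (q/m)^2 \;\leq\; \ell \cdot (q/m)^2,
\]
hence $\ell^3 \leq (q/m)^2$, equivalently $m\ell^{3/2} \leq q$. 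Finally, \eqref{E21}--\eqref{E22} give $|g| \leq \ell$, so since $g \geq 0$ and $x \mapsto x^{3/2}$ is monotone nondecreasing on $\mathbb{R}_+$, $m g \sqrt{g} = m g^{3/2} \leq m\ell^{3/2} \leq q$, which is the defining inequality of $\mathcal{S}_t$.

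For the reverse direction, given $(g,q) \in \mathcal{S}_t$, I would exhibit the explicit choice
\[
\ell := g, \quad v := g, \quad s := \tfrac{1}{2}\sqrt{g}, \quad w := \tfrac{1}{2}\sqrt{g}, \quad r := 1/8,
\]
and verify each constraint in turn. Nonnegativity \eqref{E6} is immediate, as are the equalities \eqref{E3}--\eqref{E5} and the absolute-value bounds \eqref{E21}--\eqref{E22} (using $g \geq 0$). For \eqref{E2}, both sides equal $g/4$. For \eqref{E1}, the left-hand side is $g^2$ and the right-hand side is $\sqrt{g}\,q/m$, so \eqref{E1} becomes $g^2 \leq \sqrt{g}\,q/m$, i.e., $m g^{3/2} \leq q$, which is exactly the hypothesis $(g,q) \in \mathcal{S}_t$.

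Since the argument is a routine reduction using two rotated second-order cones chained together, there is no real obstacle; the only point requiring care is the algebraic composition in the forward direction (squaring \eqref{E1} and invoking \eqref{E2} in the form $4s^2 \leq \ell$ to cancel a factor of $\ell$), together with the use of monotonicity of $x \mapsto x^{3/2}$ on $\mathbb{R}_+$ to transfer the bound on $\ell^{3/2}$ to a bound on $g^{3/2}$. As a sanity check, the degenerate case $g=0$ is handled automatically by the choice above, and the constraints $-g \leq \ell$ together with $g \leq \ell$ are exactly what is needed to accommodate both signs of a generic trade variable, although in our context $g \geq 0$ is already enforced.
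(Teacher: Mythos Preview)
Your proof is correct and follows essentially the same route as the paper's argument for Theorem~\ref{TH1}: both derive $s^2 \le \ell/4$ from \eqref{E2}--\eqref{E5} and combine it with \eqref{E1} to obtain $m\ell^{3/2} \le q$, then use $|g|\le \ell$ for the forward direction, and both exhibit the same explicit construction $\ell=v=g$, $s=w=\tfrac12\sqrt{g}$, $r=1/8$ for the reverse direction. The only cosmetic difference is that you square \eqref{E1} before substituting, whereas the paper substitutes $s\le \tfrac12\sqrt{\ell}$ directly into \eqref{E1}; in the forward direction you should also note that the cancellation of a factor of $\ell$ in $\ell^4 \le \ell(q/m)^2$ is justified because the case $\ell=0$ forces $g=0$ and is trivial.
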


Assuming stagewise independence.
The problem at stage $T$ is:
\begin{subequations}\label{MI-T}
\begin{align}
Q_T\left( x_{T-1} \right)=\text{Max} \ & \mathbb{E}\left[\sum\limits_{i=1}^{n+1}(1+r^i_{T})   x^i_{T} \right] \label{OBJT}&\\
{\rm s.t.} \ &x^i_{t}=(1+r_{t-1}^i) x_{T-1}^i-y_t^i+z_t^i      &i=1,\ldots,n \label{p1_2T}\\
 	&x_t^{n+1}=(1+r_{t-1}^{n+1})x_{t-1}^{n+1}   +\sum\limits_{i=1}^{n}(y_t^i - z_t^i - q_t^i) & \label{F3T}\\
	&y_t^i \leq (1+r_{t-1}^{i})x_{t-1}^{i} & i=1,\ldots,n \label{p1_4T}\\
  &x_t^i \leq u_i * \sum\limits_{i=1}^{n+1} (1+r_{t-1}^i) x_{t-1}^i & i=1,\ldots,n \label{p1_5T}\\
	&g_t^i = y_t^i+z_t^i & i=1,\ldots,n \label{F4T}\\
  &(\ell_t^i)^2 \leq 2 s_t^i \frac{q^i_t} {m^i_t}       & i=1,\ldots,n \label{E1T}\\    
	&(w_t^i)^2 \leq 2 v^T_i w_i^T        & i=1,\ldots,n  \label{E2T} \\    
  &\ell_t^i = v_t^i       & i=1,\ldots,n \label{E3T} \\   
	&s_t^i = w_t^i        & i=1,\ldots,n \label{E4T} \\     
	&w_t^i = 0.125        & i=1,\ldots,n \label{E5T} \\    
		&-g_t^i \leq \ell_t^i    & i=1,\ldots,n \label{E21T}\\ 
  &g_t^i  \leq \ell_t^i    & i=1,\ldots,n \label{E22T} \\
	&y_t^i, z_t^i,s_t^i, v_t^i  \ge 0 &         i=1,\ldots,n \label{p1_6T} \\
	&x_t^i \ge 0 &         i=1,\ldots,n+1 \label{p1_LAST} \\
\end{align}
\end{subequations}
with $t=T$.

At stage $t=T-1,\dots,2$, we have
\begin{subequations}\label{MI_t}
\begin{align}
Q_t\left( x_{t-1} \right)=\text{Max} \ & \mathbb{E}\left[ Q_{t+1}\left( x_{t} \right) \right] \label{OBJtt}&\\
{\rm s.t.} \ & \eqref{p1_2T}-\eqref{p1_LAST}  \ . \notag 
\end{align}
\end{subequations}

At the first stage $t=1$, we have:
\begin{subequations}\label{MI_1}
\begin{align}
\text{Max} \  & \mathbb{E}\left[ Q_{2}\left( x_{1} \right) \right] &\\
{\rm s.t.} \ & \eqref{p1_2T}-\eqref{p1_LAST}  \ . & \notag    
\end{align}
\end{subequations}
with $t=1$.

}\fi

\section{Numerical experiments}\label{numsim}


In this section, we evaluate the computational efficiency of the {\sc DDP-REG} and {\sc SDDP-REG} algorithms presented in sections \ref{REDA} and \ref{SREFDA}, and benchmark 
them with standard, non regularized versions of the deterministic and stochastic DDP algorithms. 
The analysis starts (section \ref{simdet}) with the deterministic setting and the {\sc DDP-REG} algorithm tested on a portfolio optimization 
problem with direct transaction costs, and continues in section \ref{STOCHA} with the stochastic case and the {\sc SDDP-REG} algorithm tested on risk-neutral and 
risk-averse formulations involving either direct transaction or market impact costs. In practice, portfolio selection problem parameters (the returns) are not known 
in advance and stochastic optimization models are used for these applications.
We use such models in section \ref{STOCHA}.
However, to compare DDP and DDP-REG, we assume that the parameters of the portfolio problems,
namely the returns, are known over the optimization period. This  
allows us to easily generate feasible problem instances that can be solved with DDP and DDP-REG and to
know what would have been the best return for these instances.

\subsection{Data and parameter settings} \label{DATA}

The problem instances and the algorithms are modelled in Python and the problems are solved with {\sc MOSEK} 8.0.0.50 solver \cite{MOSEK}.
The experiments are carried out using a single thread of an Intel(R) Core(TM) i5-4200M CPU @ 2.50GHz machine.

The following settings are used for the parameters of the portfolio optimization problems described in section \ref{FINM}. 
The budget available is  \$1 billion and can be used to invest in $n=6$ risky securities in addition to cash. The proportional direct transaction costs $\eta = \nu$ are set to 1\%.
The return data of six securities were collected from WRDS \cite{WRDS} for the period ranging from July 2005 to May 2016.
The monthly fixed cash return is equal to $0.2\%$.
The largest position in any security is set to $u_i=20\%$. 
The parameters of the {\sc DDP-REG} and {\sc SDDP-REG} algorithms follow. 
We consider a number $T$ of stages ranging from 10 to 350.
The sample size per stage, i.e., the cardinality of $\Theta_t$ (using the notation of section \ref{SREFDA}), is set to $M=60$.

As we recall from section \ref{convreddp:sec} for DDP-REG and from 
subsection \ref{proxpenaliz} for SDDP-REG,
we need to define sequences $x_{t}^{P,k}$ of prox-centers and $\lambda_{t, k}$ of penalization parameters
to define instances of DDP-REG and SDDP-REG.
In our study, we will use the prox-centers and penalization parameters given in Table \ref{namevariants}
(we recall that no penalization is used for $t=T$ and for $k=1$, i.e., $\lambda_{T,k}=\lambda_{t,1}=0$
for all $t, k$).
This table also contains the names used for the corresponding DDP-REG and SDDP-REG variants.
\begin{table}
\centering
\begin{tabular}{|c||c|c|}
\hline
\begin{tabular}{c}
{\tt{Variant name}} \end{tabular}& \begin{tabular}{l}{\tt{Prox-center }}$x_{t}^{P,k}$\\for $t<T, k>1$\end{tabular} & \begin{tabular}{l}{\tt{Penalization }}$\lambda_{t, k}$\\for $t<T, k>1$\end{tabular}  \\
\hline
\begin{tabular}{l}
DDP-REG-PREV-REG1-$\rho$ or\\SDDP-REG-PREV-REG1-$\rho$
\end{tabular}&   $x_{t-1}^{k}$    &    $\rho^k$ with $0<\rho<1$       \\       
\hline
\begin{tabular}{l}
DDP-REG-PREV-REG2 or\\SDDP-REG-PREV-REG2 \end{tabular}              &           $x_{t-1}^{k}$                  &      $\frac{1}{k^2}$     \\  
\hline
\begin{tabular}{l}
DDP-REG-AVG-REG1-$\rho$ or\\SDDP-REG-AVG-REG1-$\rho$\end{tabular}   &     $\displaystyle  \frac{1}{k-1}\sum_{j=1}^{k-1} x_t^j$               &   $\rho^k$ with $0<\rho<1$ \\
\hline
\begin{tabular}{l}
DDP-REG-AVG-REG2 or\\SDDP-REG-AVG-REG2\end{tabular}  &        $\displaystyle \frac{1}{k-1}\sum_{j=1}^{k-1} x_t^j$                    &        $\frac{1}{k^2}$               \\
\hline
\end{tabular}
\caption{Some variants of DDP-REG and SDDP-REG.}
\label{namevariants}
\end{table}
We recall that in \cite{powellasamov}, only the variant SDDP-REG-PREV-REG1-$\rho$ was tested for linear programs.
In this section, we test all deterministic variants from Table \ref{namevariants} for linear programs and
variant SDDP-REG-PREV-REG2 for multistage stochastic linear and nonlinear programs.

\subsection{Deterministic instances} \label{simdet}

\par In this section, we consider the deterministic counterpart of the portfolio optimization problem with direct transaction costs 
presented in section \ref{linearmodels} using the parameters given in the previous section and 8 different values
for the number $T$ of time periods: $T=10, 50, 100, 150, 200, 250, 300$, and $350$.
We solve these problems using DDP and the following 6 variants of DDP-REG (using the notation of Table \ref{namevariants}):
DDP-REG-PREV-REG1-0.2 (DDP-REG-PREV-REG1-$\rho$ with $\rho=0.2$), DDP-REG-PREV-REG1-0.9 (DDP-REG-PREV-REG1-$\rho$ with $\rho=0.9$),
DDP-REG-PREV-REG2, DDP-REG-AVG-REG1-0.2 (DDP-REG-AVG-REG1-$\rho$ with $\rho=0.2$), DDP-REG-AVG-REG1-0.9 (DDP-REG-AVG-REG1-$\rho$ with $\rho=0.9$),
and DDP-REG-AVG-REG2.\\

\par {\textbf{Stopping criterion.}} When studying the convergence of DDP-REG  in section \ref{REDA}, we have not discussed
the stopping criterion. At each iteration, this algorithm can compute an approximate lower bound on the optimal value
of the problem which is given at iteration $k$ by ${\underline{\mathcal{Q}}}_1^k (x_0 )$ (using the notation 
of section \ref{REDA}), the optimal value of the approximate
problem for the first time period. Observe that we can make of ${\underline{\mathcal{Q}}}_1^k (x_0 )$ an exact lower bound if we take
$\lambda_{1,k}=0$ (such strategy was used in our tests). DDP-REG can also compute at iteration $k$ the  upper bound $\sum_{t=1}^T f_t(x_{t-1}^k , x_t^k )$
on the optimal value. Given a tolerance $\varepsilon$ (taken equal to $10^{-6}$ in our experiments), the 
algorithm stops when the difference between the upper and lower bound is less than $\varepsilon$ (in this case, we have computed
an $\varepsilon$-optimal solution to the problem). Note, however, that since our portfolio problems are maximization problems,
the approximate first stage problem provides an upper bound on the optimal value and $\sum_{t=1}^T f_t(x_{t-1}^k , x_t^k )$
provides a lower bound.\\

\par We have checked that on all instances, all algorithms correctly compute the same optimal value and that the upper and lower
bounds were converging to this optimal value. For illustration, Figure \ref{fig:ddp_non_reg}
displays the evolution of the upper and lower bounds and of the optimality gap across the iterative process
with DDP for the instance with $T=300$.

\begin{figure}[!h]
  \centering
    \includegraphics[height=2.05in,width=0.45\textwidth]{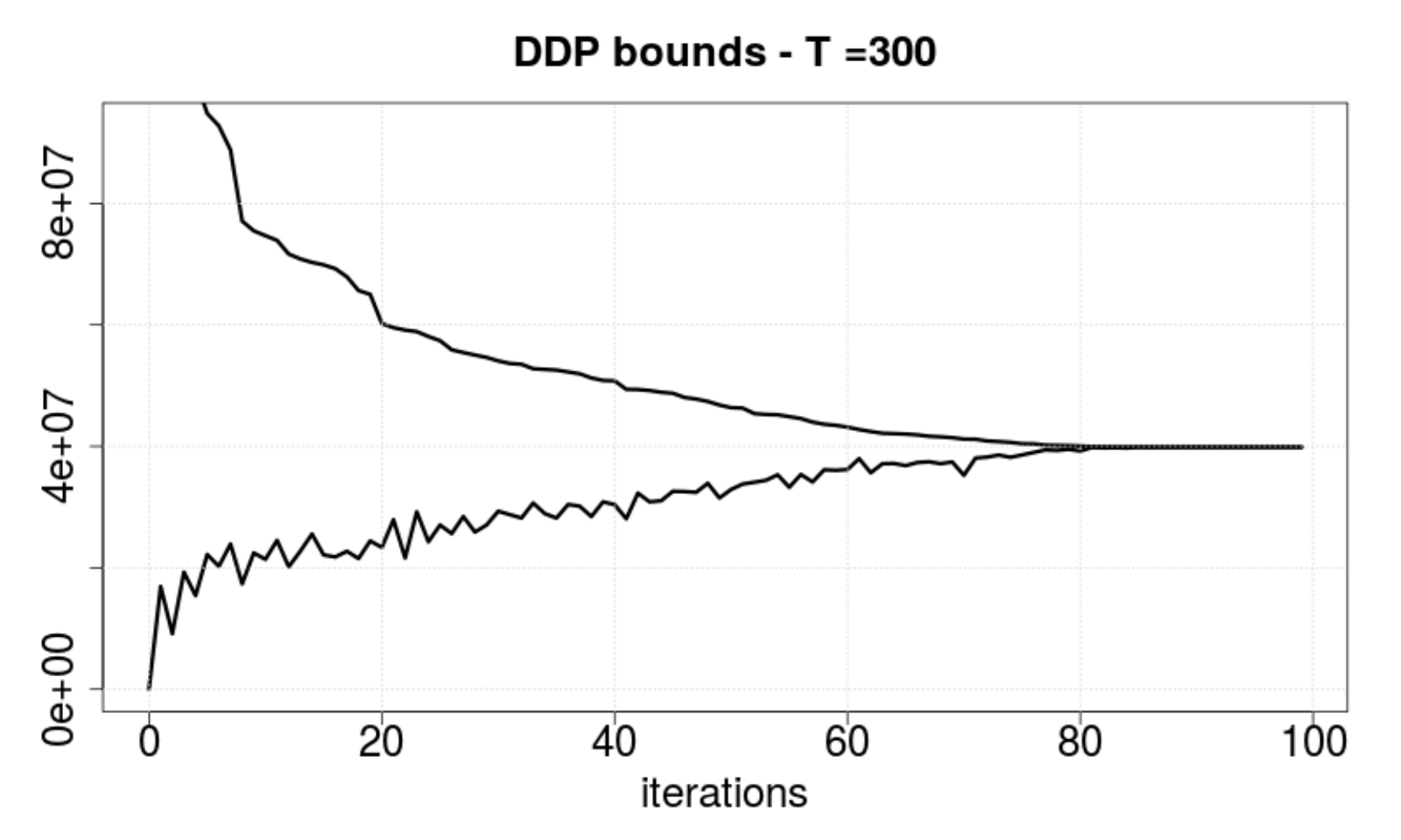}
    \includegraphics[height=2.05in,width=0.45\textwidth]{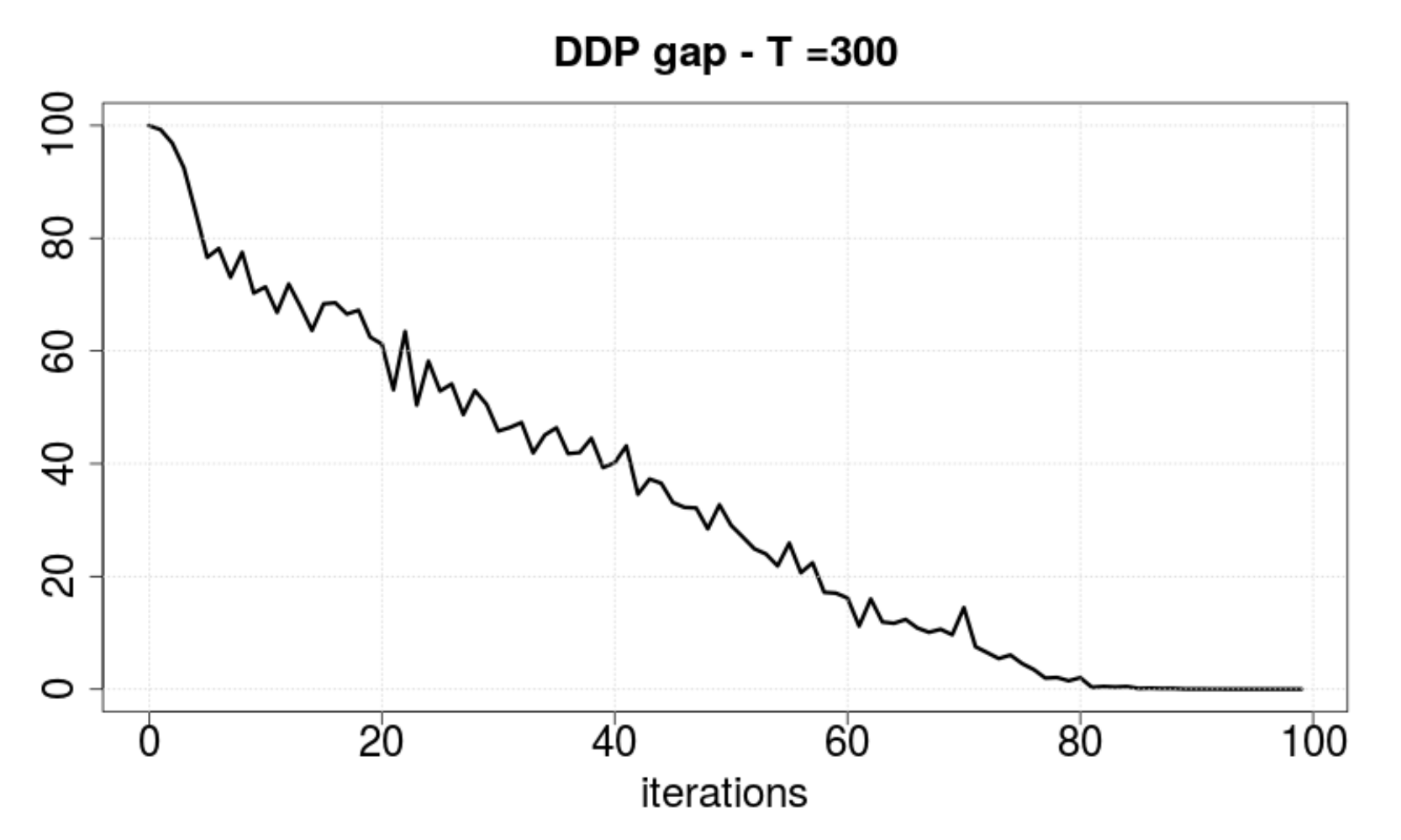}
  \caption{DDP method: DDP lower and upper bounds (left plot) and gap (right plot) in \% of the upper bound for $T=300$.}\label{fig:ddp_non_reg}
\end{figure}

\par The CPU time needed to solve the different instances with DDP and our 6 variants of DDP-REG is given
in Table \ref{tableddpreddpcpu} and the corresponding reduction factor in CPU time
for  these DDP-REG variants is given in Table \ref{tableddpreddpcpufactor}.
The number of iterations of the algorithms is given in Table \ref{tableddpreddpiteration}.
We observe that on all instances DDP-REG variants are much faster and need 
many fewer iterations than DDP.
Most importantly, the benefits of regularization increase as the problem gets larger and the number of stages raises.
When $T$ is large there is a drastic improvement in CPU time with DDP-REG variants.
For instance, for $T=250, 300$, and $350$, the reduction factor in CPU time varies (among the 6 DDP-REG variants) 
respectively in the interval $[80.0,114.3]$, $[71.5,171.6]$, and $[95.5,184.4]$.
Remarkably, the solution time with the regularized algorithm {\sc DDP-REG} is not monotonically increasing with the number of stages, which points out to the 
scalability of the algorithm and the possibility to use it for even larger problems.
As an illustration, the difference in time and number of iterations between DDP and DDP-REG-PREV-REG2 is shown in 
Figure \ref{fig:ddp_comp}, which highlights that the time and iteration differential increase with the number of stages.

\begin{table}
\centering
\begin{tabular}{|c||c|c|c|c|c|c|c|c|}
\hline
$T$ & 10 &50&100&150&200&250&300&350\\
\hline
\hline
DDP &3&69&268&780&1304&2400&4289&5348\\
\hline
DDP-REG-PREV-REG2&1&4&8&13&17&30&25&29\\
\hline
DDP-REG-PREV-REG1-0.2&1&4&12&21&28&23&60&56\\
\hline
DDP-REG-PREV-REG1-0.9&1&4&8&12&17&21&25&29\\
\hline
DDP-REG-AVG-REG2&1&4&8&13&17&30&25&29\\
\hline
DDP-REG-AVG-REG1-0.2&1&5&8&12&17&21&47&55\\
\hline
DDP-REG-AVG-REG1-0.9&1&4&9&13&17&22&26&30\\
\hline
\end{tabular}
\caption{CPU time (in seconds) to solve instances of a portfolio problem of form \eqref{eq:DPP-prob}, namely the deterministic counterpart
of the porfolio models from section \ref{linearmodels}, using DDP and various variants of DDP-REG.}
\label{tableddpreddpcpu}
\end{table}

\begin{table}
\centering
\begin{tabular}{|c||c|c|c|c|c|c|c|c|}
\hline
$T$ & 10 &50&100&150&200&250&300&350\\
\hline
\hline
DDP-REG-PREV-REG2&3.0&17.3&33.5&60.0&76.7&80.0&171.6&184.4\\
\hline
DDP-REG-PREV-REG1-0.2&3.0&17.3&22.3&37.1&46.6&104.4&71.5&95.5\\
\hline
DDP-REG-PREV-REG1-0.9&3.0&17.3&33.5&65.0&76.7&114.3&171.6&184.4\\
\hline
DDP-REG-AVG-REG2&3.0&17.3&33.5&60.0&76.7&80.0&171.6&184.4\\
\hline
DDP-REG-AVG-REG1-0.2&3.0&13.8&33.5&65.0&76.7&114.3&91.3&97.2\\
\hline
DDP-REG-AVG-REG1-0.9&3.0&17.3&29.8&60.0&76.7&109.1&165.0&178.3\\
\hline
\end{tabular}
\caption{CPU time reduction factor for different DDP-REG variants.}
\label{tableddpreddpcpufactor}
\end{table}

\begin{table}
\centering
\begin{tabular}{|c||c|c|c|c|c|c|c|c|}
\hline
$T$ & 10 &50&100&150&200&250&300&350\\
\hline
\hline
DDP&10&26&39&58&66&83&100&104\\
\hline
DDP-REG-PREV-REG2&3&3&3&3&3&4&3&3\\
\hline
DDP-REG-PREV-REG1-0.2&3&3&3&3&3&3&6&5\\
\hline
 DDP-REG-PREV-REG1-0.9&3&3&3&3&3&3&3&3\\
\hline
DDP-REG-AVG-REG2&3&3&3&3&3&4&3&3\\
\hline
DDP-REG-AVG-REG1-0.2&3&3&3&3&3&3&5&5\\
\hline
 DDP-REG-AVG-REG1-0.9&3&3&3&3&3&3&3&3\\
 \hline
\end{tabular}
\caption{Number of iterations to solve instances of a portfolio problem of form \eqref{eq:DPP-prob}, namely the deterministic counterpart
of the porfolio models from section \ref{linearmodels}, using DDP and various variants of DDP-REG.}
\label{tableddpreddpiteration}
\end{table}

 \begin{figure}[!h]
  \centering
    \includegraphics[height=2.05in,width=0.45\textwidth]{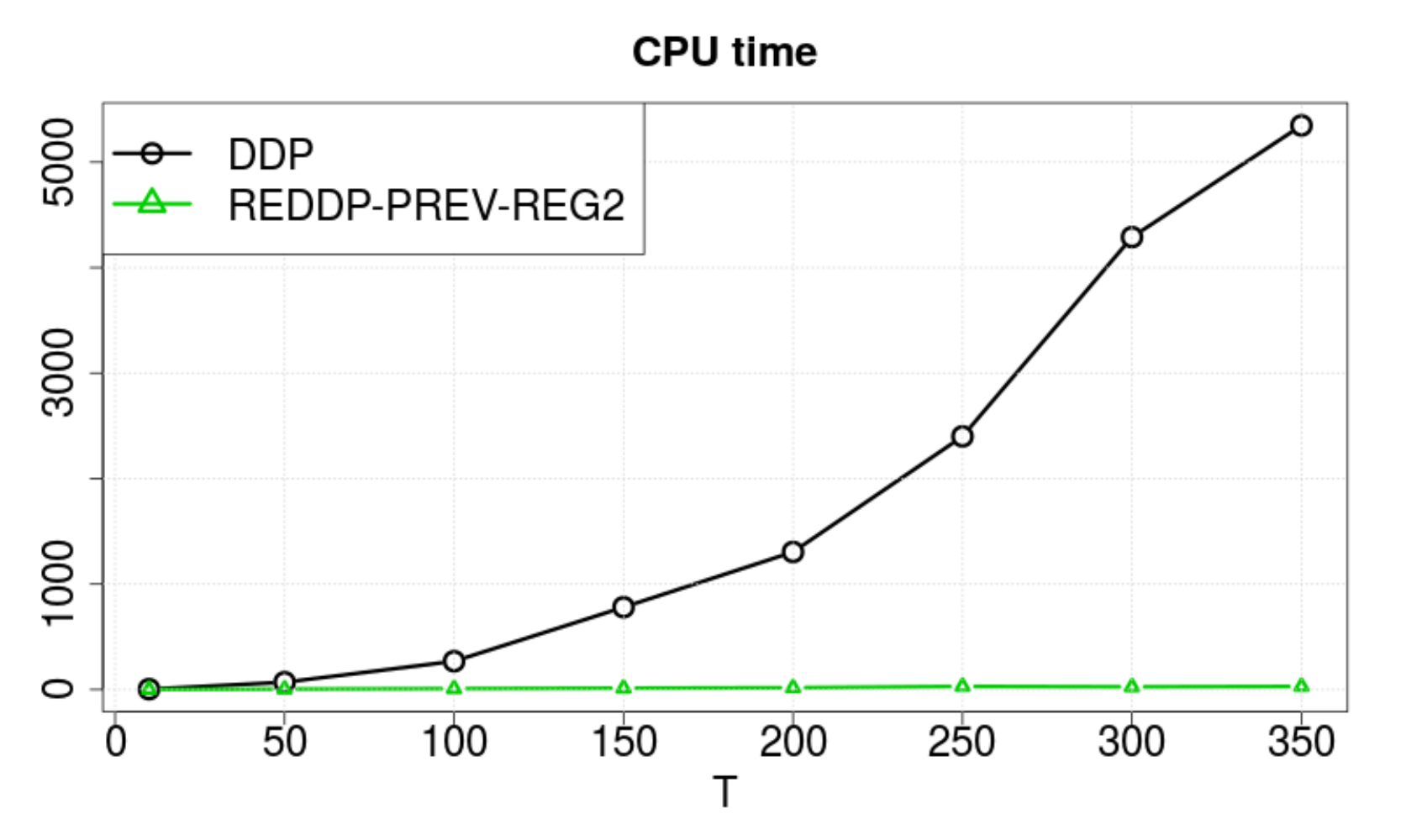}
    \includegraphics[height=2.05in,width=0.45\textwidth]{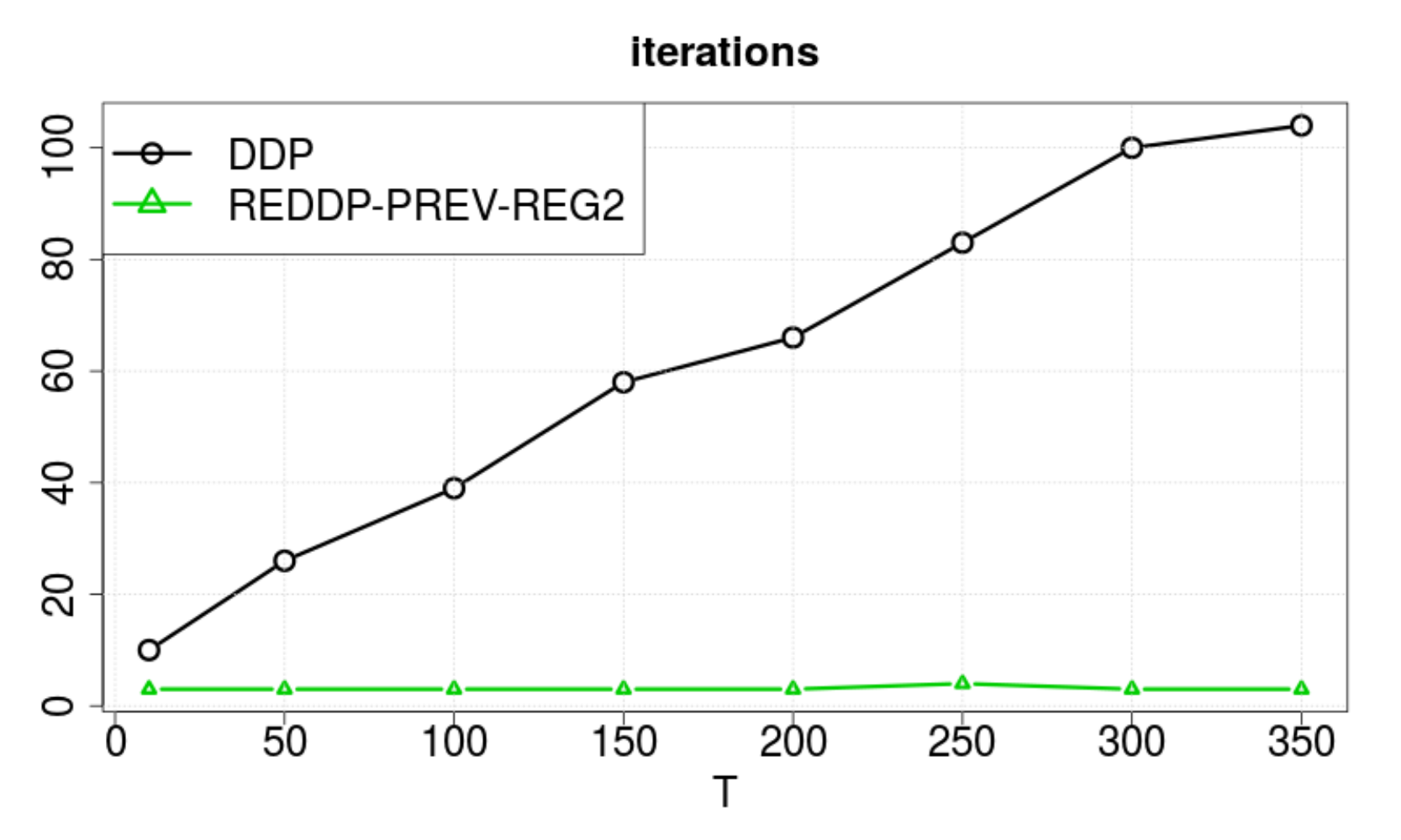}
\caption{Difference in solution time and iteration number between DDP-REG-PREV-REG2 and DDP algorithms.}\label{fig:ddp_comp}
\end{figure}

\subsection{Stochastic instances} \label{STOCHA}

In this section, we evaluate the computational efficiency of the {\sc SDDP-REG} algorithm presented in section \ref{SREFDA}, and benchmark it with the 
standard, non regularized version of the SDDP algorithm. We have implemented the regularization scheme {\sc SDDP-REG-PREV-REG2} given that 
penalization scheme REG2 performed best for the deterministic instances (see section \ref{simdet}). 
The algorithms are tested on three types of problem instances with $T = 12,20,24$ periods: risk-neutral 
portfolio models of subsection \ref{linearmodels}, risk-averse portfolio models of subsection \ref{linearmodels},
and risk-neutral porfolio model with market impact costs from 
subsection \ref{marketimpcostmodel}.\\

\par {\textbf{Stopping criterion.}} For risk-neutral SDDP, we used the following stopping criterion. The algorithm stops if the
gap is $<3\%$.
The gap is defined as $\frac{Ub-Lb}{Ub}$ where $Ub$ and $Lb$ correspond to upper and lower bounds, respectively. 
The upper bound $Ub$ corresponds to the optimal value of the first stage problem (recall that we have a maximization problem), obtained
taking, as for DDP-REG, $\lambda_{1,k}=0$ (if $\lambda_{1,k} \neq 0$, we get a sequence of approximate upper bounds, which, as we have seen,
converges almost surely to the optimal value of the problem). 
The lower bound $Lb$ corresponds to the lower end of a 95\%-one-sided confidence interval on the optimal value for $N=500$ policy realizations, see 
\cite{shapsddp} for a detailed discussion on this stopping criterion. 
Risk averse SDDP was terminated after a fixed number of iterations ($ = 50 $).\\

\subsubsection{Risk-neutral multistage linear problem with direct transaction costs \eqref{eq1dp}-\eqref{eq2dp}}

We report in Table \ref{resultstosddpreg1} the computational time and number of iterations
required for SDDP and SDDP-REG-PREV-REG2 to solve the instance of portfolio problem \eqref{eq1dpra}-\eqref{eq2dpra} obtained taking 
$T = 12,20,24$ and the problem
parameters given in subsection \ref{DATA}. We observe that as in the deterministic case, the regularized
decomposition method converges much faster (it is about twice as fast for $T=24$) and requires
many fewer iterations. We also refer to Figure \ref{boundsriskneutral} where the evolution of the upper and 
lower bounds and the gap (in \% of the upper bound) are represented for SDDP and SDDP-REG-PREV-REG2 for $T=24$. We see
that the gap decreases much faster with SDDP-REG-PREV-REG2.

\begin{table}
\centering
\begin{tabular}{|c|c||c|c|}
\hline
{\tt{T}} & {\tt{Variant}} &   {\tt{CPU time (s)}} & {\tt{Number of iterations}}\\
\hline
\hline
$12$ & SDDP &     14 & 4   \\
\hline
$12$  & SDDP-REG-PREV-REG2 &  6 & 2  \\
  \hline
$20$ & SDDP &   29   &  5  \\
\hline
$20$ & SDDP-REG-PREV-REG2 &  21 &  4 \\
  \hline
$24$ & SDDP &   40   &  6  \\
\hline
$24$  & SDDP-REG-PREV-REG2 & 18  &  3 \\
  \hline
\end{tabular}
\caption{CPU time and number of iterations to solve an instance of a portfolio problem of form \eqref{eq1dp}-\eqref{eq2dp} using SDDP and SDDP-REG-PREV-REG2.}\label{resultstosddpreg1}
\end{table}

\begin{figure}[!h]
  \centering
    \includegraphics[width=0.45\textwidth]{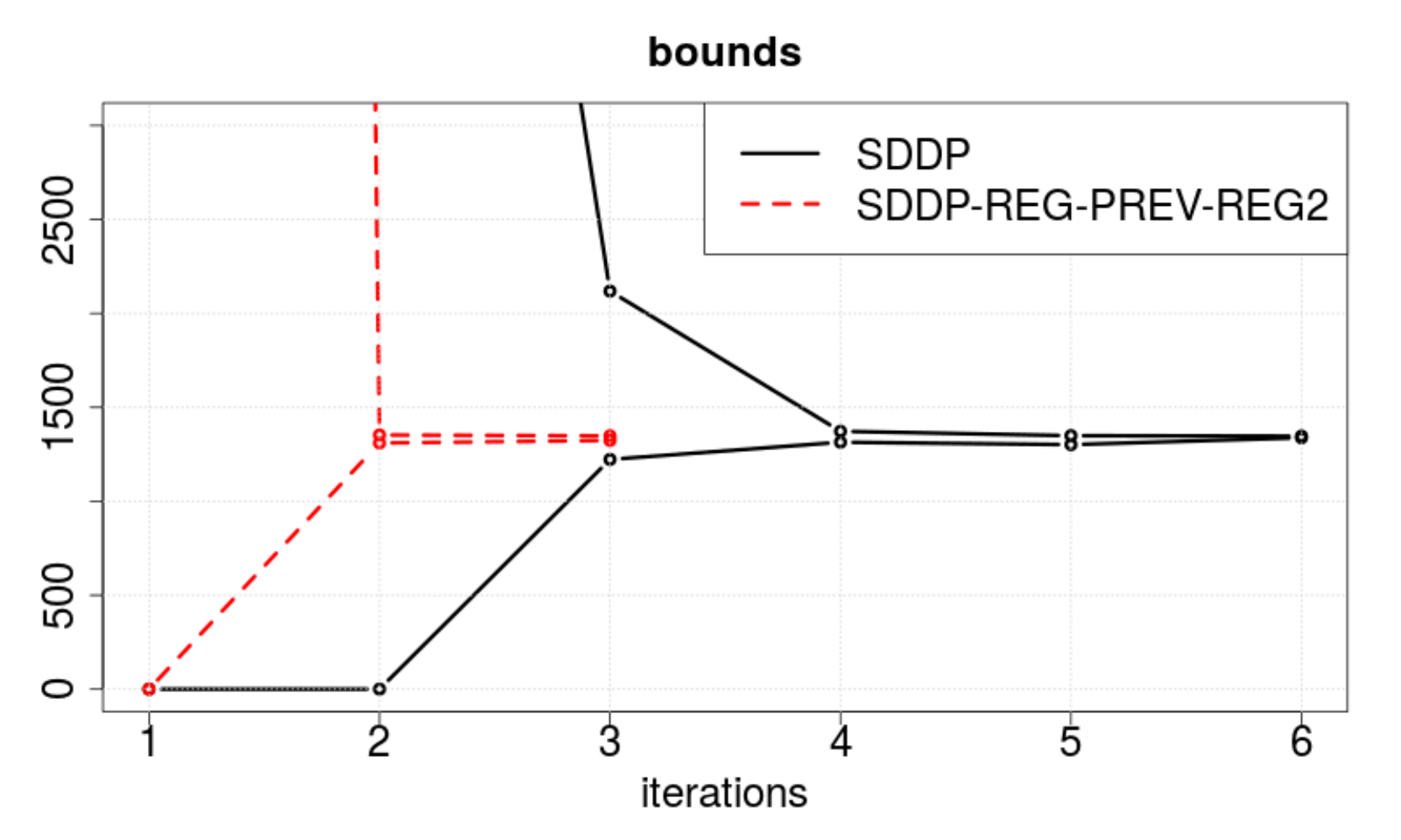}
    \includegraphics[width=0.45\textwidth]{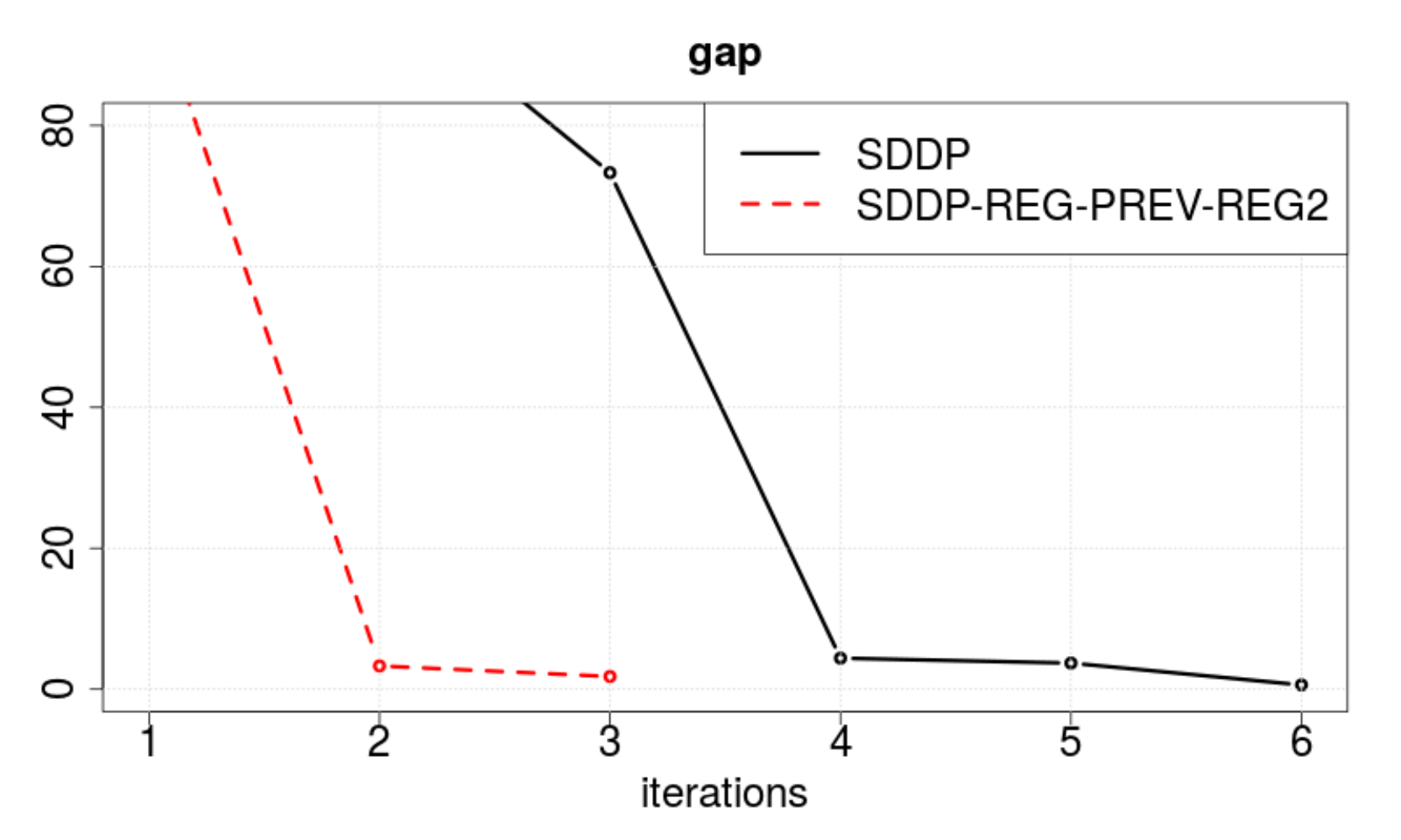}
  \caption{Risk-neutral upper and lower bounds (left plot) and gap (right plot) in \% of the upper bound for $T=24$}\label{boundsriskneutral}
\end{figure}

\subsubsection{Risk-averse multistage linear problem with direct transaction costs \eqref{eq1dpra}-\eqref{eq2dpra}}

We implemented risk-averse models \eqref{eq1dpra}-\eqref{eq2dpra} taking $\kappa_t = 0.1$ and $\alpha_t=0.1$,
running the algorithms for 50 iterations.
The CPU time is reported in Table \ref{resultstosddpreg1ra}. Since both problems are run for the same number
of iterations and since the regularized variant requires solving quadratic problems instead of just linear
programs in the forward passes, it was expected to have a larger computational time with the regularized variant.
However, the difference is small. We also report in Figure \ref{boundsriskneutralra} the evolution of the upper
bounds for SDDP and SDDP-REG-PREV-REG2. We see again
that the upper bound decreases much faster with SDDP-REG-PREV-REG2.

\begin{table}
\centering
\begin{tabular}{|c||c|c|}
\hline
{\tt{Variant}} &   {\tt{CPU time (s)}} & {\tt{Number of iterations}}\\
\hline
\hline
SDDP &     3895 & 50   \\
\hline
SDDP-REG-PREV-REG2 &  3921 & 50  \\
  \hline
\end{tabular}
\caption{CPU time and number of iterations to solve an instance of a portfolio problem of form \eqref{eq1dpra}-\eqref{eq2dpra}
with $T=48$ using SDDP and SDDP-REG-PREV-REG2.}\label{resultstosddpreg1ra}
\end{table}

\begin{figure}[!h]
  \centering
    \includegraphics[width=0.45\textwidth]{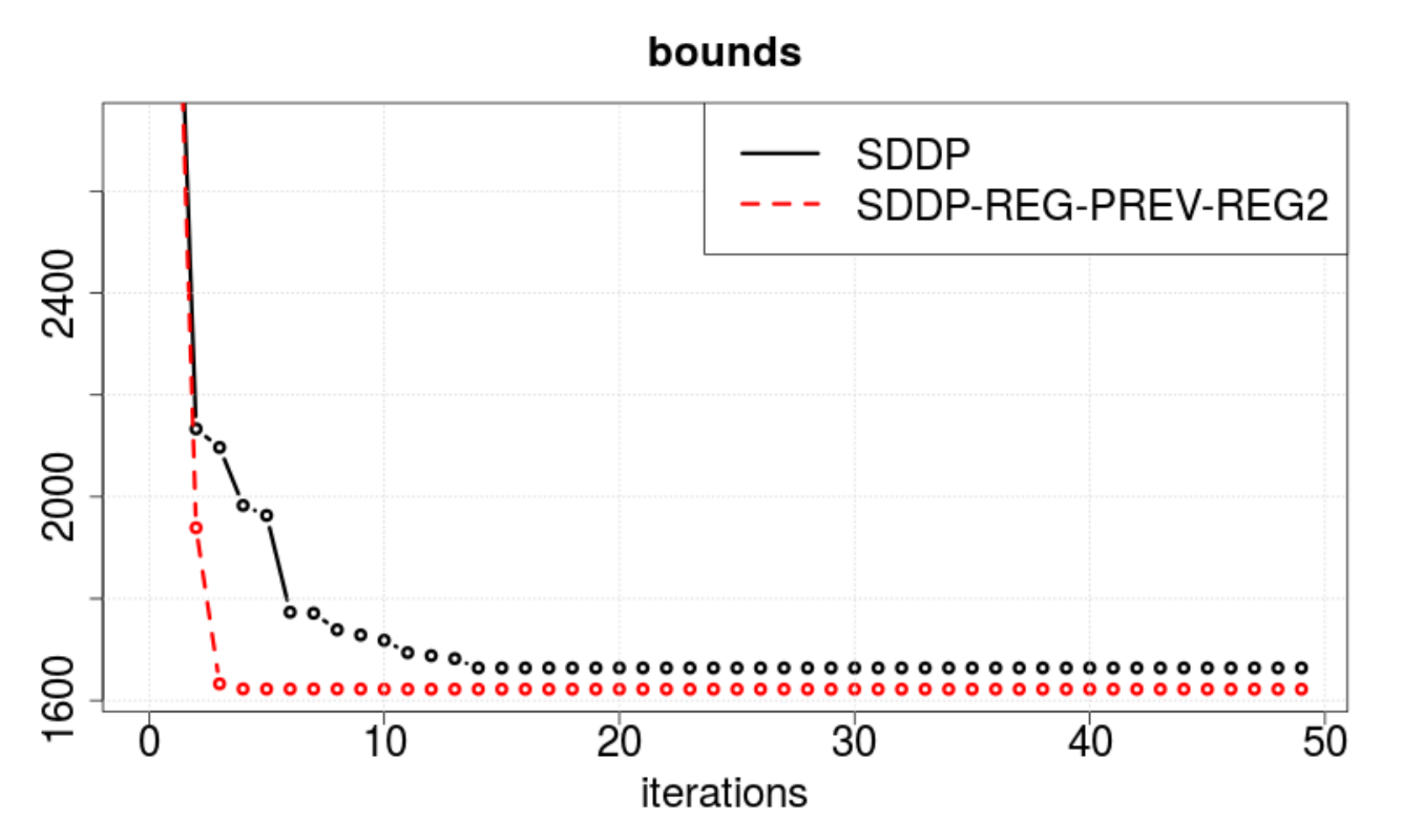}
  \caption{Risk-averse upper bounds, $\kappa_t = 0.1$, $\alpha_t=0.1$.}\label{boundsriskneutralra}
\end{figure}

\subsubsection{Conic risk-neutral multistage stochastic problem with market impact costs from section \ref{marketimpcostmodel}} 

We consider two variants of the portfolio problem with market impact costs given in 
section \ref{marketimpcostmodel}
in which we set the market impact unit cost $m_i, =1,\ldots,n$, to respectively  3 basis points (we recall that a basis point is $0.01\%=10^{-4}$)
for the first model 
and $3\%=0.03$ for the second.
The CPU time and number of iterations to solve these problems with SDDP and SDDP-REG-PREV-REG2
are given in Table \ref{resultstosddpreg1ramc}. The evolution of the upper and lower bounds
and of the gap along the iterations of the algorithms are reported in Figures \ref{boundsriskneutralm1}
and \ref{boundsriskneutralm2} for $T=24$. We observe that when $m_i$ are small the regularized variant is much quicker
and the gap decreases much faster. When $m_i$ increases, in particular for the value $3\%$, more money is invested
in cash and the computational time and gap evolution with the non-regularized and regularized variants of SDDP
are similar.

\begin{table}
\centering
\begin{tabular}{|c|c|c||c|c|}
\hline
$T$ & $m_i$ & {\tt{Variant}}                        &   {\tt{CPU time (s)}} & {\tt{Number of iterations}}\\
\hline
\hline
12& $3bp$  & SDDP &   20   &  8  \\
\hline
12  & $3bp$ & SDDP-REG-PREV-REG2 &    7   &  3  \\
\hline
12 & $3\%$ & SDDP                     &    6  &   3 \\
\hline
12       & $3\%$   & SDDP-REG-PREV-REG2                     &    7  &   3 \\
\hline
20& $3bp$  & SDDP &   43   &  10  \\
\hline
 20  &  $3bp$ & SDDP-REG-PREV-REG2 &   19    &  5  \\
 \hline
20& $3\%$ & SDDP                     &    11  &  3  \\
\hline
 20      & $3\%$  & SDDP-REG-PREV-REG2                     &    11  &  3  \\
\hline
24& $3bp$  & SDDP &    55  &  10  \\
\hline
24                            & $3bp$ & SDDP-REG-PREV-REG2 &    13   &  3  \\
\hline
24 & $3\%$ & SDDP                     &    57  &  11  \\
\hline
 24      & $3\%$   & SDDP-REG-PREV-REG2                     &   13   &   3 \\
\hline
\end{tabular}
\caption{CPU time and number of iterations to solve an instance of a portfolio problem
with market costs (model from section \ref{marketimpcostmodel}) using SDDP and SDDP-REG-PREV-REG2.}\label{resultstosddpreg1ramc}
\end{table}

\begin{figure}[!h]
  \centering
    \includegraphics[width=0.4\textwidth]{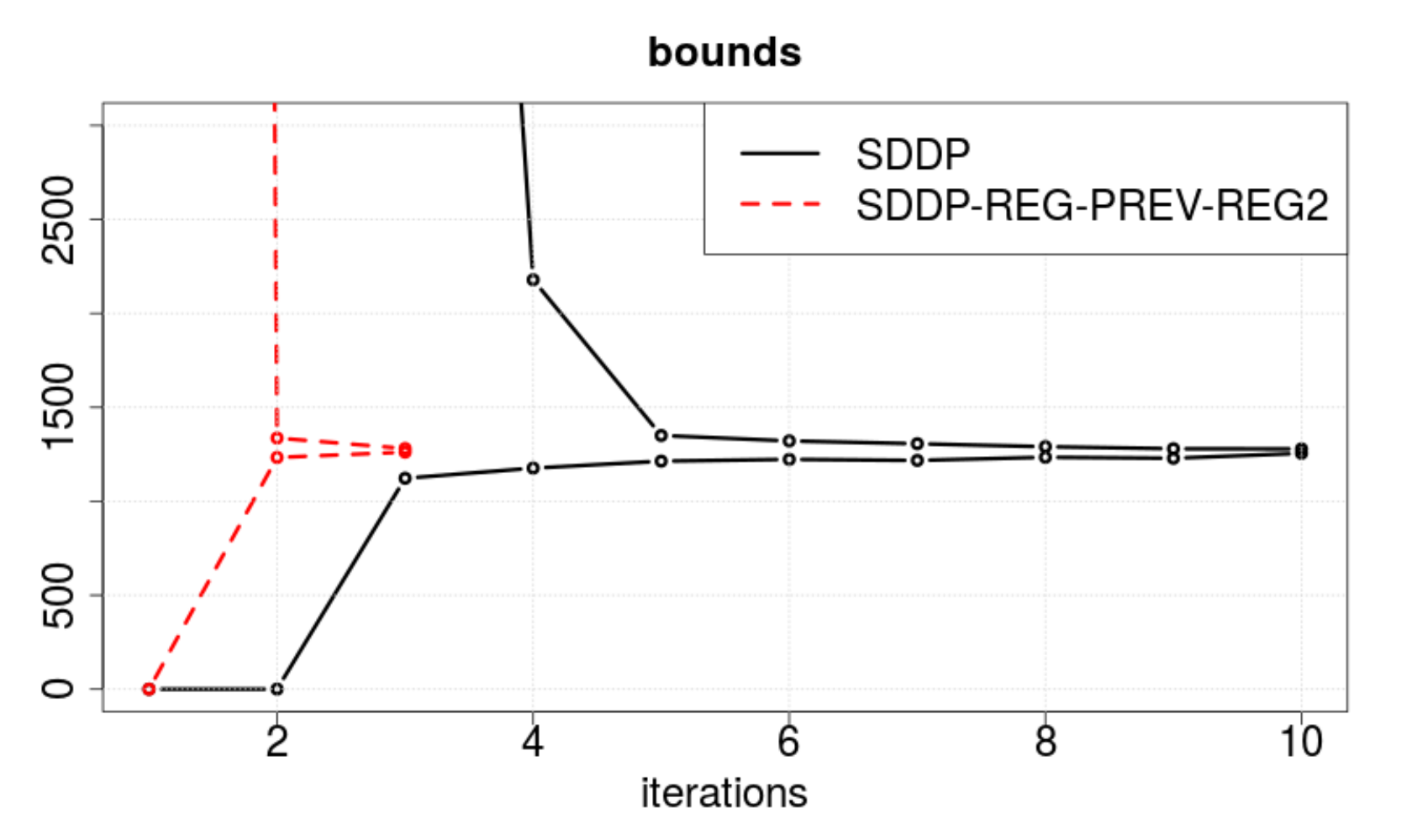}
    \includegraphics[width=0.4\textwidth]{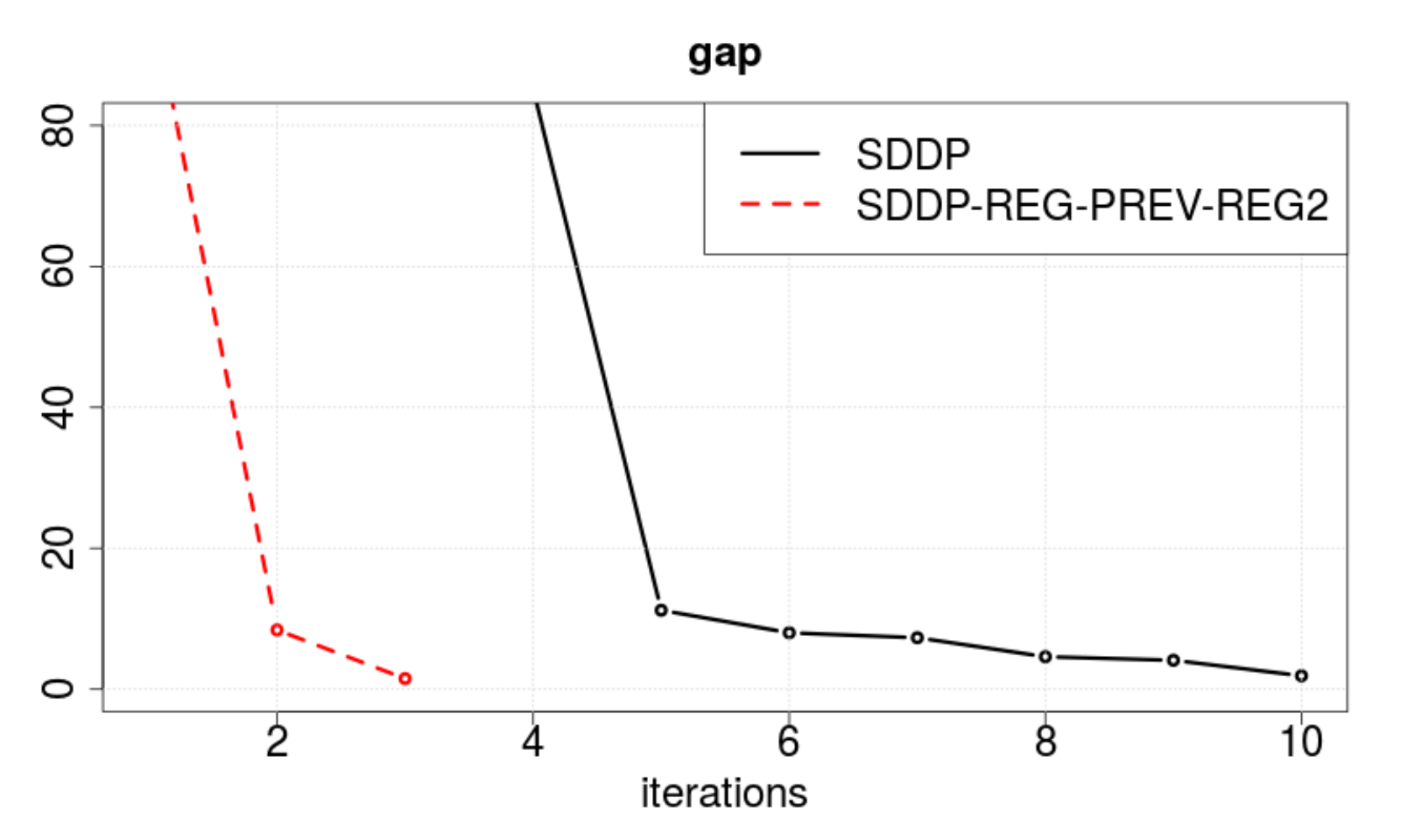}
  \caption{Upper and lower bounds (left plot) and gap (right plot) in \% of the upper bound for the risk-neutral model with market costs, $m_i=3bp$ for $T=24$.}\label{boundsriskneutralm1}
\end{figure}


\begin{figure}[!h]
  \centering
    \includegraphics[width=0.4\textwidth]{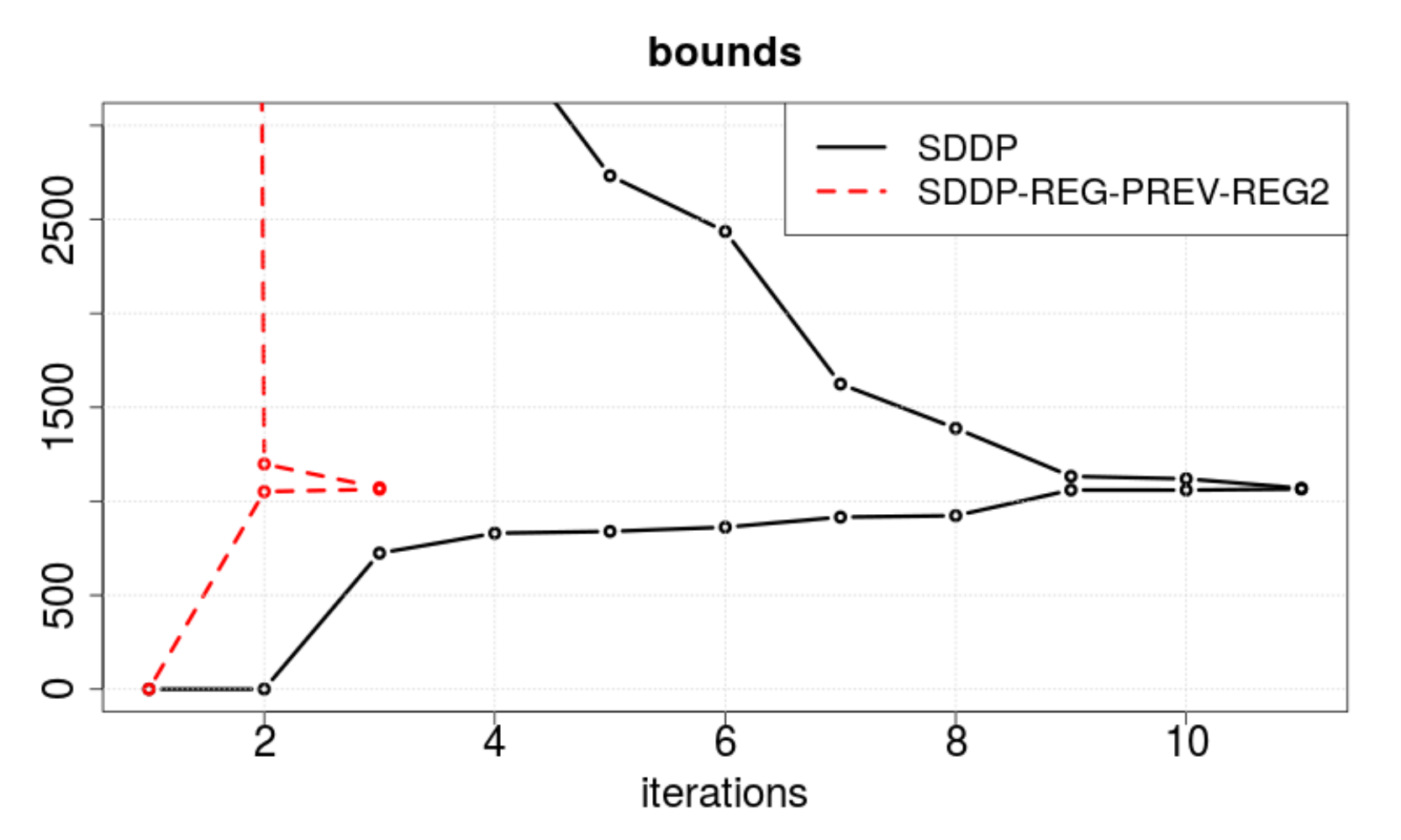}
    \includegraphics[width=0.4\textwidth]{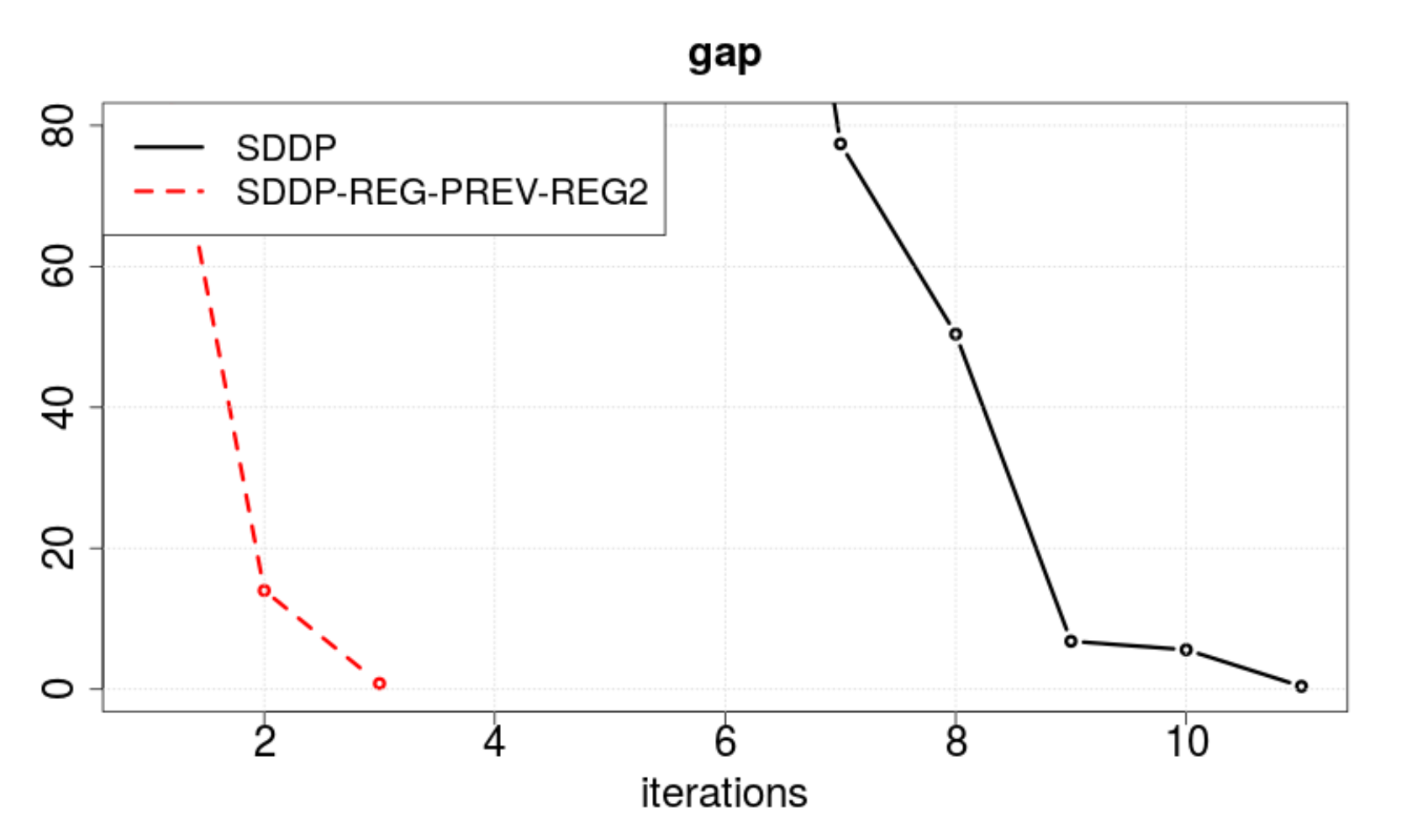}
  \caption{Upper and lower bounds (left plot) and gap (right plot) in \% of the upper bound for the risk-neutral model with market costs, $m_i=3\%$ for $T=24$.}\label{boundsriskneutralm2}
\end{figure}

\section{Conclusion}

We presented and studied regularized variants of DDP and SDDP which are extensions of 
\cite{powellasamov} to nonlinear problems and tested several prox-centers.
On the one hand, for deterministic problems,
the important reduction in CPU time when passing from DDP to DDP-REG,
in the vein of \cite{lem74}, was expected. 
In the stochastic case, we would a priori need different prox-centers for all 
nodes of the scenario tree (see also \cite{senzhou2018}). However,
such regularized variant is not computationally tractable.
Therefore, the proposed SDDP-REG offers a tractable regularized variant
of SDDP whose convergence can be shown for vanishing penalties and which can
converge quicker than SDDP on some problem instances and for 
some choices of prox-centers as shown in our experiments.
We also observe that it is possible, as in \cite{powellasamov}, to partition the decision $x_t$ for stage $t$ into
state $s_{t}$ and control $u_t$ variables and to take 
as prox-centers for stage $t$ and iteration $k$ the state component $s_t^k$
of $x_t^k$. The convergence of both this variant of SDDP-REG as well as the variant that uses prox-centers attached
to nodes of the scenario tree mentioned above can be shown following the steps of the convergence proof
of SDDP-REG given in section \ref{convsreda}.

An interesting topic on the regularization of SDDP is to define a regularization
that can be proved to have better complexity than standard SDDP.

\vspace{-0.1in}
\section*{Acknowledgments} The first author's research was partially supported by an FGV grant, CNPq grant 307287/2013-0, FAPERJ grants E-26/110.313/2014, and E-26/201.599/2014.
This research was initiated during the visit of the second author at FGV. The second author thanks Dr. Guigues for the support provided through FAPERJ grant E-26/201.599/2014.

\vspace{-0.15in}
\bibliographystyle{siamplain}
\bibliography{Biblio}

\section*{Appendix}

\par {\textbf{Proof of Theorem \ref{convalg1}}.} In this proof, all equalities and inequalities hold almost surely.
We show $\mathcal{H}(2)$, $\ldots$, $\mathcal{H}(T+1)$, by induction backwards in time.
$\mathcal{H}(T+1)$ follows from the fact that $\mathcal{Q}_{T+1}=\mathcal{Q}_{T+1}^{k}=0$.
Now assume that $\mathcal{H}(t+1)$ holds for some $t \in \{2,\ldots,T\}$. We want to show that $\mathcal{H}(t)$ holds.
Take a node $n \in {\tt{Nodes}}(t-1)$. Let
$\mathcal{S}_n= \{k \geq 1 :  n_{t-1}^k =n\}$ be the set of iterations 
such that the sampled scenario passes through node $n$. 
Due to Assumption (H3), the set $\mathcal{S}_n$ is infinite.
We first show that
\begin{equation}\label{passesthroughn}
\displaystyle \lim_{k \rightarrow +\infty,\, k \in \mathcal{S}_n} \mathcal{Q}_{t}(x_{n}^{k})-\mathcal{Q}_{t}^{k}(x_{n}^{k} )=0.
\end{equation}
Take $k \in \mathcal{S}_n$. We have $n_{t-1}^k = n, x_{n_{t-1}^k}^k=x_n^k$ and recalling 
\eqref{eqcutctk}, we have $\mathcal{C}_t^k( x_n^k )= \theta_t^k$. Using definition 
\eqref{formulathetak} of $\theta_t^k$, it follows that
\begin{equation}\label{relation1}
\mathcal{Q}_t^k(x_{n}^k) \geq \mathcal{C}_t^k( x_n^k )= \theta_t^k = \displaystyle \sum_{m \in C(n)} p_m {\underline{\mathfrak{Q}}}_t^k (   x_{n}^k , \xi_m  ).
\end{equation}
Now let ${\bar x}_m^k$ such that 
$F_t^{k-1} ( x_{n}^k , {\bar x}_m^k , \xi_m )= {\underline{\mathfrak{Q}}}_t^{k-1}( x_n^k , \xi_m )$
where ${\underline{\mathfrak{Q}}}_t^{k-1}$ is defined by \eqref{defmatfrak} with $k$ replaced by $k-1$.
Using \eqref{relation1} and the definition of $\mathcal{Q}_t$, we get
{\small
\begin{equation}\label{eqfin1conv0}
\begin{array}{lll}
0 \leq \mathcal{Q}_t(x_{n}^k) - \mathcal{Q}_t^k(x_{n}^k) & \leq &\displaystyle \sum_{m \in C(n)} p_m \Big[ \mathfrak{Q}_t(x_{n}^k, \xi_{m})- {\underline{\mathfrak{Q}}}_t^k( x_n^k , \xi_m ) \Big]\\
& \leq &\displaystyle \sum_{m \in C(n)} p_m \Big[ \mathfrak{Q}_t(x_{n}^k, \xi_{m})- {\underline{\mathfrak{Q}}}_t^{k-1}( x_n^k , \xi_m ) \Big]\mbox{ since }{\underline{\mathfrak{Q}}}_t^k \geq {\underline{\mathfrak{Q}}}_t^{k-1}\\
& =  & \displaystyle \sum_{m \in C(n)} p_m \Big[ \mathfrak{Q}_t(x_{n}^k, \xi_{m})- F_t^{k-1} ( x_{n}^k , {\bar x}_m^k , \xi_m ) \Big]\\
&  =   & \displaystyle \sum_{m \in C(n)} p_m \Big[ \mathfrak{Q}_t(x_{n}^k, \xi_{m})- F_t^{k-1} ( x_{n}^k , x_m^k , \xi_m ) \Big]\\
&     &  + \displaystyle \sum_{m \in C(n)} p_m \Big[ F_t^{k-1} ( x_{n}^k, x_m^k , \xi_m ) - F_t^{k-1} ( x_{n}^k , {\bar x}_m^k , \xi_m ) \Big].
\end{array}
\end{equation}
}
Now using the definitions of $F_t^{k-1}$ and $F_t$ we obtain 
\begin{equation}\label{eqfin1conv}
\begin{array}{lll}
\mathfrak{Q}_t(x_{n}^k, \xi_{m})- F_t^{k-1} ( x_{n}^k , x_m^k , \xi_m ) & = & 
\mathfrak{Q}_t(x_{n}^k, \xi_{m})- f_t(x_n^k, x_m^k, \xi_m) - \mathcal{Q}_{t+1}^{k-1}( x_m^k )\\
& = & \mathfrak{Q}_t(x_{n}^k, \xi_{m})- F_t(  x_n^k, x_m^k, \xi_m )\\
&& + \mathcal{Q}_{t+1}( x_m^k ) - \mathcal{Q}_{t+1}^{k-1}( x_m^k ).
\end{array}
\end{equation}
Observing that for every  $m \in C(n)$ the decision $x_m^k \in X_t(x_{n}^k, \xi_m )$, we obtain, using definition \eqref{defmathfrak} of $\mathfrak{Q}_t$, that
$$
F_t(x_{n}^k, x_m^k, \xi_m  )  \geq \mathfrak{Q}_{t}(x_{n}^k, \xi_{m}).
$$
Combining this relation with \eqref{eqfin1conv} gives for $k \in \mathcal{S}_n$
\begin{equation} \label{kinKn}
\begin{array}{l}
\mathfrak{Q}_t(x_{n}^k, \xi_{m})- F_t^{k-1} ( x_{n}^k , x_m^k , \xi_m ) \leq  \mathcal{Q}_{t+1}( x_m^k ) - \mathcal{Q}_{t+1}^{k-1}( x_m^k ).
\end{array}
\end{equation}
Next,
{\small 
\begin{equation} \label{regsddp}
\begin{array}{l}
F_t^{k-1} ( x_{n}^k, x_m^k , \xi_m ) - F_t^{k-1} ( x_{n}^k , {\bar x}_m^k , \xi_m ) \\
=  F_t^{k-1} ( x_{n}^k, x_m^k , \xi_m ) - {\bar F}_t^{k-1} ( x_{n}^k , x_m^k , x_t^{P, k}, \xi_m ) 
+ {\bar F}_t^{k-1} ( x_{n}^k, x_m^k , x_t^{P, k}, \xi_m ) \\
\;\;\; - {\bar F}_t^{k-1} ( x_{n}^k , {\bar x}_m^k , x_t^{P, k}, \xi_m ) + {\bar F}_t^{k-1} ( x_{n}^k , {\bar x}_m^k , x_t^{P, k}, \xi_m ) - F_t^{k-1} ( x_{n}^k , {\bar x}_m^k , \xi_m )\\
 \leq    F_t^{k-1} ( x_{n}^k, x_m^k , \xi_m ) - {\bar F}_t^{k-1} ( x_{n}^k , x_m^k , x_t^{P, k}, \xi_m ) 
+  {\bar F}_t^{k-1} ( x_{n}^k , {\bar x}_m^k , x_t^{P, k}, \xi_m ) \\
\;\;\; - F_t^{k-1} ( x_{n}^k , {\bar x}_m^k , \xi_m ),
\end{array}
\end{equation}
}
where the above inequality comes from the fact ${\bar x}_m^k \in X_t( x_{n}^k, \xi_m)$, i.e., ${\bar x}_m^k$ is feasible for optimization problem
\eqref{defxtkj} with objective function ${\bar F}_t^{k-1} ( x_{n}^k, \cdot , x_t^{P, k}, \xi_m )$ and optimal solution $x_m^k$.
We get
{\small
\begin{equation} \label{regsddp1}
\begin{array}{lll}
0 \leq F_t^{k-1} ( x_{n}^k, x_m^k , \xi_m ) - F_t^{k-1} ( x_{n}^k , {\bar x}_m^k , \xi_m ) & \leq  & 
\lambda_{t, k}( \|{\bar x}_m^k -  x_t^{P, k}\|^2 -  \| x_m^k -  x_t^{P, k}\|^2  )\\
& \leq &  \lambda_{t, k} \|{\bar x}_m^k -  x_t^{P, k}\|^2 \leq \lambda_{t, k} D( \mathcal{X}_t)^2,
\end{array}
\end{equation}
}
where $D( \mathcal{X}_t)$ is the diameter of $\mathcal{X}_t$ (finite, since $\mathcal{X}_t$ is compact).
Plugging \eqref{regsddp1} and \eqref{kinKn} into \eqref{eqfin1conv0} yields for any $k \in \mathcal{S}_n$
\begin{equation}\label{eqfin1conv2}
0 \leq \mathcal{Q}_t(x_{n}^k) - \mathcal{Q}_t^k(x_{n}^k)  \leq \lambda_{t, k} D( \mathcal{X}_t)^2 +
\displaystyle \sum_{m \in C(n)} p_m \Big( \mathcal{Q}_{t+1}( x_m^k ) - \mathcal{Q}_{t+1}^{k-1}( x_m^k ) \Big).
\end{equation}
Using the induction hypothesis $\mathcal{H}(t+1)$, we have for every child node $m$ of node $n$:
\begin{equation}\label{inductionm}
\displaystyle \lim_{k \rightarrow +\infty} \mathcal{Q}_{t+1}(x_{m}^{k})-\mathcal{Q}_{t+1}^{k}(x_{m}^{k} )=0.
\end{equation}
Now recall that $\mathcal{Q}_{t+1}$ is convex on the compact set $\mathcal{X}_t$ (Proposition \ref{convexityrec}),
$x_{m}^k \in  \mathcal{X}_t$ for every child node $m$ of node $n$,
and the functions $\mathcal{Q}_{t+1}^{k}, k \geq 1$, are Lipschitz continuous 
with $\mathcal{Q}_{t+1} \geq \mathcal{Q}_{t+1}^{k} \geq \mathcal{Q}_{t+1}^{k-1}$ on $\mathcal{X}_t$
(Lemma \ref{lipqtk}).
It follows that we can use Lemma A.1 in \cite{lecphilgirar12} to deduce from \eqref{inductionm} that
for every $m \in C(n)$
$$
\displaystyle \lim_{k \rightarrow +\infty   } \mathcal{Q}_{t+1}(x_{m}^{k})-\mathcal{Q}_{t+1}^{k-1}(x_{m}^{k} )=0.
$$
Combining this relation with \eqref{eqfin1conv2} and using the fact that $\lim_{k \rightarrow +\infty} \lambda_{t, k}=0$, we obtain 
\begin{equation}\label{convfirst}
\lim_{k \rightarrow +\infty, k \in \mathcal{S}_n    } \mathcal{Q}_{t}(x_{n}^{k})-\mathcal{Q}_{t}^{k}(x_{n}^{k} )=0.
\end{equation}
To show $\mathcal{H}(t)$, it remains to show that 
\begin{equation}\label{inductionm3}
\displaystyle \lim_{k \rightarrow +\infty, k \notin \mathcal{S}_n    } \mathcal{Q}_{t}(x_{n}^{k})-\mathcal{Q}_{t}^{k}(x_{n}^{k} )=0.
\end{equation}
Relation \eqref{inductionm3} can be shown following the end of the proof of 
Theorem 4.1 in \cite{guigues2015siopt}, by contradiction and using the Strong Law of Large Numbers (the same arguments
were first used in a similar context in Theorem 3.1 of \cite{lecphilgirar12} but for a different problem formulation
and sampling scheme). The key to the proof being the fact that the sampled nodes for iteration $k$ are independent on the decisions
computed at the nodes of the scenario tree for  that iteration and on recourse functions $\mathcal{Q}_{t+1}^{k-1}$.
This achieves the proof of (i).
\par (ii) Recalling that the root node $n_0$ with decision 
$x_0$ taken at that node has a single child node $n_1$ with corresponding decision 
$x_{n_1}^k$ at iteration $k$, the computations in (i) show that for every $k \geq 1$\footnote{Though when deriving these relations
in (i) we had fixed $k \in \mathcal{S}_n$, the inequalities we now re-use for (ii) are valid for any $k \geq 1$.}, we have
\begin{equation}\label{convoptvalfirst}
\begin{array}{lll}
0 \leq  \mathfrak{Q}_1( x_0 , \xi_1 ) - {\underline{\mathfrak{Q}}}_1^k(x_0, \xi_1 )  & \leq &  \mathfrak{Q}_1( x_0 , \xi_1 ) - F_1^{k-1}( x_0, x_{n_1}^k, \xi_1 ) + \lambda_{1, k} D(\mathcal{X}_1)^2,\\
& \leq & \mathcal{Q}_2( x_{n_1}^k ) - \mathcal{Q}_2^{k-1}(  x_{n_1}^k ) + \lambda_{1, k} D(\mathcal{X}_1)^2. 
\end{array}
\end{equation}
We have shown in (i) that $\lim_{k \rightarrow +\infty} \mathcal{Q}_2( x_{n_1}^k ) - \mathcal{Q}_2^{k-1}(  x_{n_1}^k )=0$. Plugging this relation
into \eqref{convoptvalfirst} shows that 
{\small
$$
\lim_{k \rightarrow +\infty} {\underline{\mathfrak{Q}}}_1^k(x_0, \xi_1 ) = \lim_{k \rightarrow +\infty} F_1^{k-1}( x_0, x_{n_1}^k, \xi_1 ) =  \lim_{k \rightarrow +\infty} {\bar F}_1^{k-1}(x_0, x_{n_1}^k , x_1^{P, k}, \xi_1)=\mathfrak{Q}_1( x_0 , \xi_1 ).
$$
}
Now take an accumulation point $(x_n^*)_{n \in \mathcal{N}}$ 
of the sequence $((x_n^k)_{n \in \mathcal{N}})_{k \geq 1}$ and let
$K$ be an infinite set of iterations such that for every $n \in \mathcal{N}$, $\lim_{k \rightarrow +\infty, k \in K} x_n^k= x_n^*$.\footnote{The existence of an accumulation point comes from the fact that the decisions belong almost surely to a compact set.}
Using once again computations from (i), we get for any $k \geq 1$, $t=1,\ldots,T$,
$n \in {\tt{Nodes}}(t-1)$, $m \in C(n)$,
{\small
$$
\begin{array}{lll}
0 \leq  \mathfrak{Q}_t( x_n^k , \xi_m ) - {\underline{\mathfrak{Q}}}_t^{k-1}(x_n^k , \xi_m )  & \leq &  \mathfrak{Q}_t( x_n^k , \xi_m ) - F_t^{k-1}( x_n^k , x_{m}^k, \xi_m ) + \lambda_{t, k} D(\mathcal{X}_t)^2,\\
& \leq & \mathcal{Q}_{t+1}( x_{m}^k ) - \mathcal{Q}_{t+1}^{k-1}(  x_{m}^k ) + \lambda_{t, k} D(\mathcal{X}_t)^2, 
\end{array}
$$
}
which can be written 
$$
-\lambda_{t, k} D(\mathcal{X}_t)^2 \leq \mathfrak{Q}_t( x_n^k , \xi_m ) - F_t^{k-1}( x_n^k , x_{m}^k, \xi_m )  \leq \mathcal{Q}_{t+1}( x_{m}^k ) - \mathcal{Q}_{t+1}^{k-1}(  x_{m}^k ).
$$
Since $\lim_{k \rightarrow +\infty} \mathcal{Q}_{t+1}( x_{m}^k ) - \mathcal{Q}_{t+1}^{k-1}(  x_{m}^k )=0$ (due to (i)), the above
relation shows that 
\begin{equation}\label{convoptval}
\lim_{k \rightarrow +\infty} \mathfrak{Q}_t( x_n^k , \xi_m ) - F_t^{k-1}( x_n^k , x_{m}^k, \xi_m )=0.
\end{equation}
We will now use the continuity of $\mathfrak{Q}_t(\cdot, \xi_m)$ which follows from (H2) (see Lemma 3.2 in \cite{guigues2015siopt} for a proof).
We have
\begin{equation}\label{optsolconv}
\begin{array}{lll}
\mathfrak{Q}_{t} ( x_{n}^* , \xi_m )  & = & \displaystyle  \lim_{k \rightarrow +\infty, k\in K} \mathfrak{Q}_{t} ( x_{n}^k , \xi_m ) \mbox{ using the continuity of }\mathfrak{Q}_t(\cdot, \xi_m),\\
& =& \displaystyle \lim_{k \rightarrow +\infty, k\in K} F_t^{k-1}( x_n^k , x_{m}^k, \xi_m ) \mbox{ using }\eqref{convoptval},\\
& = & \displaystyle \lim_{k \rightarrow +\infty, k\in K} f_t(x_n^k , x_{m}^k, \xi_m ) + \mathcal{Q}_{t+1}^{k-1}(x_m^k ),\\
& \geq & f_t(x_n^*, x_m^*, \xi_m) + \displaystyle \lim_{k \rightarrow +\infty, k\in K} \mathcal{Q}_{t+1}(x_m^k ) \mbox{ using (i) and lsc of }f_t,\\
& \geq &f_t(x_n^*, x_m^*, \xi_m) + \mathcal{Q}_{t+1}(x_m^* )=F_t(x_n^*, x_m^*, \xi_m)
\end{array}
\end{equation}
where for the last inequality we have used the continuity of $\mathcal{Q}_{t+1}$.
To prove (ii) it suffices to observe that the sequence $(x_n^k, x_m^k)_{k \in K}$ belongs to the set 
$$
{\bar X}_{t, m}=\{(x_{t-1},x_t) \in \mathcal{X}_{t-1}  \small{\times} \mathcal{X}_t : g_t(x_{t-1},x_t,\xi_m ) \leq 0, \,A_m x_t + B_m x_{t-1} = b_m\}
$$
and this set is closed since $g_t$ is lower semicontinuous and $\mathcal{X}_t$ is closed.
Thus, $x_m^* \in X_t(x_n^*, \xi_m)$, which, together with \eqref{optsolconv}, shows that
$x_m^*$ is an optimal solution of $\mathfrak{Q}_{t} ( x_{n}^* , \xi_m )=\inf\{F_t(x_n^*, x_m, \xi_m) \;:\;x_m \in X_t(x_n^*, \xi_m)\}$
and
achieves the proof of~(ii).

\end{document}